\newtheorem{theorem}{Theorem}[section]
\newtheorem{lemma}[theorem]{Lemma}
\newtheorem{problem}[theorem]{Problem}
\theoremstyle{definition}
\newtheorem{definition}[theorem]{Definition}
\begin{document}

\title[A locally minimal, but not globally minimal bridge position]{A locally minimal, but not globally minimal bridge position of a knot}

\author{Makoto Ozawa}
\address{Department of Natural Sciences, Faculty of Arts and Sciences, Komazawa University, 1-23-1 Komazawa, Setagaya-ku, Tokyo, 154-8525, Japan}
\email{w3c@komazawa-u.ac.jp}
\thanks{The first author is partially supported by Grant-in-Aid for Scientific Research (C) (No. 23540105), The Ministry of Education, Culture, Sports, Science and Technology, Japan}

\author{Kazuto Takao}
\address{Department of Mathematics, Graduate School of Science, Osaka University, 1-1 Machikaneyama-cho, Toyonaka, Osaka, 560-0043, Japan}
\email{kazutotakao@gmail.com}

\subjclass[2010]{57M25}

\keywords{knot, bridge position, stabilization}

\begin{abstract}
We give a locally minimal, but not globally minimal bridge position of a knot, that is, an unstabilized, nonminimal bridge position of a knot.
It implies that a bridge position cannot always be simplified so that the bridge number monotonically decreases to the minimal.
\end{abstract}

\maketitle

\section{Introduction}\label{intro}

A {\em knot} is an equivalence class of embeddings of the circle $S^1$ into the 3-sphere $S^3$, where two embeddings are said to be {\em equivalent} if an ambient isotopy of $S^3$ deforms one to the other.
In knot theory, it is a fundamental and important problem to determine whether given two representatives of knots are equivalent, and furthermore to describe how one can be deformed to the other.
In particular, a simplification to a ``minimal position" is of great interest.

Let $h:S^3\to \mathbb{R}$ be the standard height function, that is, the restriction of $\mathbb{R}^4=\mathbb{R}^3\times \mathbb{R}\to \mathbb{R}$ to $S^3$.
A {\em Morse position} of a knot $K$ is a representative $k$ such that $k$ is disjoint from the poles of $S^3$ and the critical points of $h|_k$ are all non-degenerate and have pairwise distinct values.
Since $k$ is a circle, there are the same number of local maxima and local minima.

In \cite{Sc}, Schubert introduced the notion of bridge position and bridge number for knots.
A {\em bridge position} of $K$ is a Morse position $k$ where all the local maxima are above all the local minima with respect to $h$.
A level $2$-sphere $S$ separating the local maxima from the local minima is called a {\em bridge sphere} of $k$.
If $k$ intersects $S$ in $2n$ points, then $k$ is called an {\em $n$-bridge position} and $n$ is called the {\em bridge number} of $k$.
The minimum of the bridge number over all bridge positions of $K$ is called the {\em bridge number} of $K$.
A knot with the bridge number $n$ is called an {\em $n$-bridge knot}.
The bridge number is a fundamental geometric invariant of knots as well as the crossing number.

In \cite{G}, Gabai introduced the notion of width for knots.
Suppose that $k$ is a Morse position of a knot $K$, let $t_1,\ldots,t_m$ be the critical levels of $h|_k$ such that $t_i<t_{i+1}$ for $i=1,\ldots,m-1$, and choose regular levels $r_1,\ldots,r_{m-1}$ of $h|_k$ so that $t_i<r_i<t_{i+1}$.
The {\em width} of $k$ is defined as $\sum_{i=1}^{m-1}|h^{-1}(r_i)\cap k|$, and the {\em width} of $K$ is the minimum of the width over all Morse positions of $K$.

Two Morse positions of a knot are {\em isotopic} if an ambient isotopy of $S^3$ deforms one to the other keeping it a Morse position except for exchanging two levels of local maxima or two levels of local minima.
Such an isotopy preserves the width of a Morse position and the bridge number of a bridge position.
The following two types of moves change the isotopy class of a Morse position.

Suppose that $k$ is a Morse position of a knot $K$ and let $t_1,r_1,t_2,\ldots,r_{m-1},t_m$ as above.
We say that the level 2-sphere $h^{-1}(r_i)$ is {\em thick} if $t_i$ is a locally minimal level of $h|_k$ and $t_{i+1}$ is a locally maximal level of $h|_k$, and that $h^{-1}(r_i)$ is {\em thin} if $t_i$ is a locally maximal level and $t_{i+1}$ is a locally minimal level.
A {\em strict upper} (resp. {\em lower}) {\em disk} for a thick sphere $S$ is a disk $D\subset S^3$ such that the interior of $D$ does not intersect with $k$ and any thin sphere, the interior of $D$ contains no critical points with respect to $h$, and $\partial D$ consists of a subarc $\alpha$ of $k$ and an arc in $S-k$.
Note that the arc $\alpha$ has exactly one local maximum (resp. minimum).
First suppose that there exist a strict upper disk $D_+$ and a strict lower disk $D_-$ for a thick sphere $S$ such that $D_+\cap D_-$ consists of a single point of $k\cap S$.
Then $k$ can be isotoped along $D_+$ and $D_-$ to cancel the local maximum in $\partial D_+$ and the local minimum in $\partial D_-$.
In \cite{Sch2}, Schultens called this move a {\em Type I move}.
The inverse operation of a Type I move is called a {\em stabilization} (\cite{B}, \cite{O1}) or a {\em perturbation} (\cite{ST}, \cite{T}) and the resultant position is said to be {\em stabilized} or {\em perturbed}.
Next suppose that there exist a strict upper disk $D_+$ and a strict lower disk $D_-$ for a thick sphere $S$ such that $D_+\cap D_-=\emptyset$. 
Then $k$ can be isotoped along $D_+$ and $D_-$ to exchange the two levels of the local maximum in $\partial D_+$ and the local minimum in $\partial D_-$.
Schultens (\cite{Sch2}) called this move a {\em Type II move}.

\begin{figure}[htbp]
	\begin{center}
	\includegraphics[bb=0 0 505 145, width=.8\linewidth]{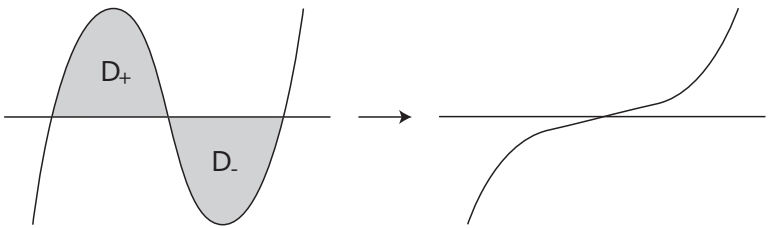}
	\end{center}
	\caption{Type I move}
	\label{type1}
\end{figure}

\begin{figure}[htbp]
	\begin{center}
	\includegraphics[bb=0 0 505 145, width=.8\linewidth]{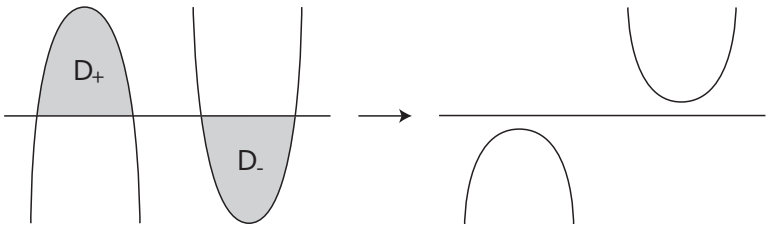}
	\end{center}
	\caption{Type II move}
	\label{type2}
\end{figure}

The following are fundamental theorems for bridge positions and Morse positions which correspond to Reidemeister's theorem (\cite{R2}, \cite{AB}) for knot diagrams.

\begin{theorem}[\cite{B}, \cite{Ha}]\label{fundamental1}
Two knots are equivalent if and only if their two bridge positions can be related by a sequence of Type I moves and the inverse operations up to isotopy.
\end{theorem}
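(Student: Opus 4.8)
The plan is to prove the two directions separately, with essentially all the work in the ``only if'' direction. The ``if'' direction is immediate: a Type I move, its inverse, and the level-exchange isotopies are all realized by ambient isotopies of $S^3$, so any two bridge positions related by such a sequence represent equivalent knots.

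For the ``only if'' direction, suppose $k_0$ and $k_1$ are bridge positions of equivalent knots and fix an ambient isotopy $\Phi_t$ of $S^3$ with $\Phi_0=\mathrm{id}$ and $\Phi_1(k_0)=k_1$. I would study the one-parameter family of functions $f_t=h\circ\Phi_t|_{k_0}$ on the circle $k_0$. Using Cerf theory, I would perturb the path $t\mapsto f_t$ rel endpoints, keeping $f_0$ and $f_1$ their given Morse functions, so that $f_t$ is a Morse function with distinct critical values for all but finitely many $t$, and at each exceptional parameter exactly one codimension-one degeneracy occurs. On the $1$-manifold $k_0$ the only such degeneracies are: (i) a birth or death of an adjacent local-maximum/local-minimum pair, and (ii) a transient coincidence of two critical values, with the two critical points remaining nondegenerate.

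Next I would interpret each degeneracy as a move on the Morse positions $\Phi_t(k_0)$. A coincidence of values between two maxima, or between two minima, is by definition absorbed into the isotopy relation on Morse positions. A birth inserts a small cancelling maximum/minimum pair and, after a levelling isotopy placing a level $2$-sphere between them, is exactly the inverse of a Type I move (a stabilization), while a death is a Type I move; here one checks the strict upper/lower disk conditions, but since the cancelling pair is created in an arbitrarily small ball, the two guiding disks can be taken tiny, meeting $k_0$ in a single point of the thick sphere and avoiding all other strands, critical points, and thin spheres, so the hypotheses of a Type I move hold. The only remaining phenomenon is a coincidence of values between a maximum and a minimum, which is precisely a Type II move.

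The main obstacle is therefore to eliminate the Type II moves, since they are not in the allowed move set; this is where I expect the real difficulty to lie. I would prove a lemma asserting that every Type II move can be replaced, up to isotopy, by a stabilization followed by a destabilization: given disjoint strict upper and lower disks $D_+,D_-$ for a thick sphere $S$ witnessing the level exchange of a maximum and a minimum, one introduces an auxiliary cancelling pair by a stabilization in a small ball so that the forbidden maximum--minimum exchange factors through permitted maximum--maximum and minimum--minimum level exchanges, and then removes the auxiliary pair by a destabilization. Substituting this local replacement for each Type II move in the sequence produced by the Cerf analysis yields a sequence of Type I moves, their inverses, and isotopies carrying $k_0$ to $k_1$, which is the desired conclusion. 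Verifying that this replacement can always be carried out disjointly from the rest of the knot, and that the guiding disks retain the strict upper/lower disk property throughout, is the delicate point that the argument must settle carefully.
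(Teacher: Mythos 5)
The first thing to say is that the paper contains no proof of this statement to compare against: Theorem \ref{fundamental1} is quoted from Birman \cite{B} and Hayashi \cite{Ha}. Birman's proof is essentially algebraic (plat representations, the Hilden subgroup, and Markov's theorem for braids), and Hayashi's is a geometric stable-equivalence argument; neither proceeds by the move-by-move Cerf analysis you describe. That said, your ``if'' direction is fine, and your Cerf-theoretic setup (births/deaths correspond to Type I moves and their inverses, same-index value crossings are absorbed into isotopy) is sound as far as it goes --- it is close in spirit to how Theorem \ref{fundamental2} is established.

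The genuine gap is your key lemma, and it is not a repairable detail: the mechanism you describe for it cannot work. You propose to realize a Type II move --- exchanging the levels of a maximum $M$ and a minimum $m$ admitting disjoint strict disks --- by introducing an auxiliary cancelling pair, factoring the exchange through maximum--maximum and minimum--minimum level exchanges, and then removing the auxiliary pair. But there is an invariant that forbids this: under isotopy (which by the paper's definition permits only same-index level exchanges) and under Type I moves or their inverses applied to \emph{other} pairs, the above/below relation between any maximum and any minimum that both survive the sequence is preserved, since a maximum can trade levels only with other maxima, a minimum only with other minima, and births/deaths elsewhere never move the two past each other. In your replacement sequence $M$ and $m$ both persist (only the auxiliary pair is created and destroyed), so at the end $M$ is still above $m$, and the sequence cannot have effected the Type II move. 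The same invariant shows what any correct substitute must do --- kill $M$ (or $m$) by a Type I move and recreate it by a perturbation on the other side --- and if you pursue this you find it blocked in general: a perturbation can only be performed at a thick sphere, its new minimum is born trapped below its new maximum, and clearing the critical values between $M$ and that new minimum so that they can cancel would itself require a forbidden maximum--minimum exchange past $m$. Note also that if your lemma were true, Theorem \ref{fundamental1} would be an immediate corollary of Theorem \ref{fundamental2}; in fact the entire difficulty of Birman's and Hayashi's theorem is concentrated precisely at these maximum--minimum crossings, and both authors resolve it by global arguments rather than by any local two-move substitution.
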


\begin{theorem}[\cite{Sch2}]\label{fundamental2}
Two knots are equivalent if and only if their two Morse positions can be related by a sequence of Type I and Type II moves and the inverse operations up to isotopy.
\end{theorem}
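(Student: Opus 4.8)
The plan is to prove the nontrivial (``only if'') direction by a one-parameter Morse--Cerf analysis of the family of height functions carried along an ambient isotopy; this extends Theorem~\ref{fundamental1}, the new feature being that Type II moves must now be produced. The converse is immediate: each of Type I, Type II and the isotopies permitted in the statement is by construction realized by an ambient isotopy of $S^3$, so two Morse positions related by such moves are ambient isotopic and hence represent equivalent knots.

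For the forward direction, suppose $K$ and $K'$ are equivalent with Morse positions $k_0$ and $k_1$. Choose an ambient isotopy $\{F_s\}_{s\in[0,1]}$ with $F_0=\mathrm{id}$ and $F_1(k_0)=k_1$, and set $k_s=F_s(k_0)$ and $g_s=h|_{k_s}\colon S^1\to\mathbb{R}$. First I would put the path in general position: after a $C^\infty$-small perturbation of $F_s$ (fixing $s=0,1$, keeping each $k_s$ embedded and disjoint from the poles) the arc $s\mapsto g_s$ becomes a generic arc in $C^\infty(S^1,\mathbb{R})$. The Morse functions with pairwise distinct critical values are open and dense, and the top stratum of their complement has codimension one, consisting of functions with either (i) a single degenerate ($A_2$, cubic) critical point, or (ii) exactly two nondegenerate critical points sharing one critical value. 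A generic arc meets this stratum transversally at finitely many interior parameters $s_1<\dots<s_N$ and avoids the higher-codimension strata; on each complementary interval $g_s$ is Morse with distinct critical values, so $k_s$ is a Morse position of constant isotopy class there.

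Next I would identify each crossing with a move. At a type-(i) parameter a local maximum and a local minimum are created or annihilated; a local computation with the model $y^3-sy$ shows the maximum value exceeds the minimum value, so the two are consecutive critical values with the minimum below, and the level sphere $S$ between them is thick. Capping the maximum down to $S$ and the minimum up to $S$ by small disks that share the single point of $k_s\cap S$ lying between them yields a strict upper disk $D_+$ and a strict lower disk $D_-$ with $D_+\cap D_-$ a single point of $k_s\cap S$ --- exactly the data of a stabilization (birth) or a Type I move (death). At a type-(ii) parameter, if the two coinciding critical points are both maxima or both minima the transition is one of the permitted isotopies; if one is a maximum and one a minimum, they exchange value order across a thick sphere lying between their nearly equal values, and the downward and upward capping disks, based near the two distinct crossing points, are disjoint strict upper and lower disks, i.e.\ the data of a Type II move. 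Concatenating these events produces the required finite sequence relating $k_0$ to $k_1$.

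The main obstacle is this last step: certifying that the disks produced at each crossing are genuinely \emph{strict} upper/lower disks and realize the prescribed intersection pattern ($D_+\cap D_-$ a point for Type I, empty for Type II). This requires that near each singular parameter the change of $k_s$ be localized, so that the disk interiors can be taken inside a ball meeting neither any other critical point nor any thin sphere. I would extract this from the transversality of the generic arc, which makes the events isolated and supported near the relevant critical points, together with a careful bookkeeping of which level spheres are thick or thin immediately before and after each crossing. Verifying these local-to-global compatibilities, rather than the soft singularity theory, is where the real care is needed.
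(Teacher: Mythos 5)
This statement appears in the paper only as quoted background (Theorem~\ref{fundamental2}, attributed to \cite{Sch2}); the paper gives no proof of it, so the only meaningful comparison is with the cited source, and your Cerf-theoretic argument --- perturbing the ambient isotopy so the path of height functions is generic, then reading off births/deaths of cancelling pairs as stabilizations/Type~I moves, max--max or min--min value crossings as permitted isotopies, and max--min value crossings as Type~II moves or their inverses --- is essentially that standard proof. The localization issue you flag at the end is real but fillable exactly as you suggest: at a generic singular parameter the singular value is distinct from all other critical values, so the event occurs between adjacent critical levels and the capping disks can be taken inside a small ball there, where nondegeneracy and embeddedness of $k$ force them to be disjoint from $k$, from all thin spheres, and from each other (or to meet in the single required point of $k\cap S$ in the Type~I case).
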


We say that a bridge position of a knot $K$ is {\em globally minimal} if it realizes the bridge number of $K$, 
and a bridge position is {\em locally minimal} if it does not admit a Type I move.
Similarly, we say that a Morse position of a knot $K$ is {\em globally minimal} if it realizes the width of $K$, 
and a Morse position is {\em locally minimal} if it does not admit a Type I move nor a Type II move.
Note that if a bridge (resp. Morse) position of a knot is globally minimal, then it is locally minimal.
Otal (later Hayashi--Shimokawa, the first author) proved the converse for bridge positions of the trivial knot.

\begin{theorem}[\cite{O1}, \cite{HS}, \cite{OZ}]\label{Otal}
A locally minimal bridge position of the trivial knot is globally minimal.
\end{theorem}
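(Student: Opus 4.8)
The plan is to prove the contrapositive in the form best suited to the tools above. Since the trivial knot has bridge number $1$, its globally minimal bridge positions are exactly the $1$-bridge positions, so it suffices to show that every bridge position $k$ of the trivial knot with bridge number $n\ge 2$ admits a Type I move. The essential extra datum available for the trivial knot is that $k$ bounds an embedded disk $D$ in $S^3$, and the whole argument will be an innermost/outermost analysis of how $D$ meets a bridge sphere $S$.

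First I would put $D$ in general position with respect to $S$ and isotope it, rel $\partial D=k$, so as to minimize the number of components of $D\cap S$ while keeping $\mathrm{int}(D)$ off the poles of $S^3$. Since $D$ is a disk and $S$ is a $2$-sphere in the irreducible manifold $S^3$, every circle of $D\cap S$ can be removed by an innermost-disk isotopy across a ball; thus after the minimization $D\cap S$ consists only of arcs. Each of the $2n$ points of $k\cap S$ is an endpoint of exactly one such arc, so there are exactly $n$ arcs, and they cut $D$ into $n+1$ subdisks. Taking the dual graph (one vertex per subdisk, one edge per arc of $D\cap S$) gives a tree, and since the interior of each subdisk misses $S$, each subdisk lies in one of the two balls bounded by $S$; consecutive subdisks lie on opposite sides, so the tree is properly $2$-colored by \emph{upper} and \emph{lower}. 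A leaf of this tree is an outermost subdisk $E$, whose boundary is a single arc of $D\cap S$ together with a single subarc $\beta$ of $k$ with interior disjoint from $S$; hence $\beta$ is one bridge, and $E$ is a strict upper disk or a strict lower disk for $S$ (there are no thin spheres in a bridge position, and $\mathrm{int}(E)$ has been kept off the poles, so the remaining conditions in the definition hold).

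The goal is now to produce a strict upper disk $D_+$ and a strict lower disk $D_-$ with $D_+\cap D_-$ a single point of $k\cap S$, which by definition is exactly the configuration enabling a Type I move. When $n=1$ the tree is a single edge and $k$ is already $1$-bridge, so assume $n\ge2$, whence the tree has at least three vertices and at least two leaves of opposite colour. The main obstacle is that two distinct leaves are disjoint and share no point of $k\cap S$, because distinct arcs of $D\cap S$ have disjoint endpoint sets; so a pair of outermost disks yields at best the \emph{disjoint} upper/lower configuration of a Type II move rather than the single-point configuration needed for Type I. The real content of the theorem is precisely this upgrade from a Type II configuration to a Type I configuration, and it is here that triviality must genuinely be used. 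To carry out the upgrade I would fix an upper leaf $E_+$ on a bridge $\beta_+$ with foot $a$, observe that the lower bridge $\beta_-$ meeting $k$ at $a$ lies on the boundary of the adjacent lower region $F_-$ (which is not itself a leaf, as $n\ge2$), and then exploit that $\mathrm{int}(F_-)$ is automatically disjoint from $k$: using an innermost argument inside $S$ applied to the arc $E_+\cap S$ and the arcs separating $F_-$ from the rest of $D$, together with the minimality of $|D\cap S|$, one extracts a strict lower disk $D_-$ meeting $E_+$ only at the foot $a$. Performing the Type I move along $E_+$ and $D_-$ cancels the maximum of $\beta_+$ against the minimum of $\beta_-$, contradicting local minimality.

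I expect this last step---arranging that the upper and lower disks meet in exactly one point rather than merely being disjoint---to be the crux and the genuinely hard part; an alternative and perhaps cleaner route to it is the thin-position machinery of \cite{HS}, in which one uses that the trivial knot has an essentially unique thin position to rule out an unperturbed bridge position of bridge number $\ge2$. Either way the disk $D$, equivalently triviality, is indispensable, consistent with the fact---made explicit by the present paper---that the conclusion fails for nontrivial knots.
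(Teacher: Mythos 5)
The theorem you were asked to prove is not proved in this paper at all: it is quoted with citations to Otal \cite{O1}, Hayashi--Shimokawa \cite{HS} and Ozawa \cite{OZ}, each of which is a substantial paper in its own right. So the standard your proposal must meet is to stand on its own, and it does not. Your setup is the standard (and correct) opening: take the disk $D$ bounded by the unknot, remove circles of $D\cap S$ by innermost-disk isotopies, observe that the $n$ arcs of $D\cap S$ cut $D$ into subdisks forming a properly $2$-colored tree, and that leaves of the tree are strict upper or lower disks. But the entire content of the theorem is the step you yourself flag as the crux: upgrading a disjoint upper/lower pair (a Type~II configuration) to an upper/lower pair meeting in exactly one point of $k\cap S$ (a Type~I configuration). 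At that point your argument is an assertion, not a proof: ``using an innermost argument inside $S$ \ldots one extracts a strict lower disk $D_-$ meeting $E_+$ only at the foot $a$.'' No mechanism is given, and none is obvious. The adjacent region $F_-$ is not a strict lower disk (its boundary contains several bridges and several arcs of $D\cap S$), and the disk you need must have its arc $D_-\cap S$ avoid the arc $E_+\cap S$ everywhere except at $a$ --- a global condition on the arc pattern in $S$ that minimality of $|D\cap S|$ does not visibly control. Indeed, the falsity of exactly this kind of ``easy upgrade'' for general knots is the main theorem of the present paper, and for the unknot the cited proofs each need genuine machinery (Otal's argument, Hayashi--Shimokawa's thin position for pairs, Ozawa's representativity of spatial graphs). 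Leaving this step as a sketch leaves the theorem unproved.

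There is also a combinatorial error: a properly $2$-colored tree need not have ``two leaves of opposite colour.'' For $n=2$ the tree is a path with three vertices, so both leaves have the same color; more generally the tree can be a star with all leaves on one side. This is not fatal to your outline (which really only uses one leaf $E_+$ and its adjacent region), but it shows the intersection combinatorics is not yet under control, and it undercuts the paragraph in which you describe the Type~II configuration as the generic output of the leaf analysis.
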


This implies that even complicated bridge positions of the trivial knot can be simplified into the $1$-bridge position only by Type I moves.
Furthermore, Otal (later Scharlemann--Tomova) showed that the same statement for 2-bridge knots is true (\cite{O2}, \cite{ST}), and the first author showed that the same statement for torus knots is also true (\cite{OZ2}). 
Then, the following problem is naturally proposed.

\begin{problem}[\cite{OZ2}]\label{question}
Is any locally minimal bridge position of any knot globally minimal?
\end{problem}

In this paper, we give a negative answer to this problem.
It implies that a bridge position cannot always be simplified into a minimal bridge position only by Type I moves.

\begin{theorem}\label{main}
A $4$-bridge position $\kappa $ of a knot $\mathcal{K}$ in Figure \ref{example} is locally minimal, but not globally minimal.
\end{theorem}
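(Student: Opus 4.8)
The proof naturally splits into two independent claims: that $\kappa$ is not globally minimal, and that it is locally minimal. The first claim only requires an upper bound on the bridge number of $\mathcal{K}$, so it suffices to exhibit a single bridge position of $\mathcal{K}$ with fewer than four bridges. The plan is to read off from Figure \ref{example} an explicit ambient isotopy carrying $\mathcal{K}$ to a $3$-bridge position (equivalently, to recognize $\mathcal{K}$ as a $3$-bridge knot). This is the routine direction: one slides the arcs past one another to merge one maximum--minimum pair after a non-monotonic sequence of moves and displays the resulting $3$-bridge diagram. Note that by Theorem \ref{Otal} together with the quoted results for $2$-bridge knots, $\mathcal{K}$ cannot be trivial or $2$-bridge once $\kappa$ is known to be locally minimal, so the bridge number is in fact exactly $3$; but for the present statement the inequality $<4$ is all that is needed.

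The substance of the theorem is the second claim, that $\kappa$ admits no Type I move. I would argue by contradiction. Let $S$ be the bridge sphere, splitting $S^3$ into the upper and lower balls $B_+$ and $B_-$; the knot $\mathcal{K}$ meets $S$ in $8$ points and decomposes into $4$ trivial upper arcs (one maximum each) in $B_+$ and $4$ trivial lower arcs (one minimum each) in $B_-$. A Type I move is precisely a strict upper disk $D_+$ and a strict lower disk $D_-$ with $D_+\cap D_-$ a single point of $\mathcal{K}\cap S$; such a pair exhibits one upper arc and one lower arc sharing an endpoint on $S$ and being simultaneously parallel, through $B_+$ and through $B_-$, to arcs of $S-\mathcal{K}$ whose interiors are disjoint. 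Since distance arguments are unavailable here (a high-distance bridge position would already be globally minimal, contradicting the first claim), the obstruction must be produced by hand.

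The plan is to reduce to a finite check and then control it. For each of the $8$ points of $\mathcal{K}\cap S$ there is a unique incident upper arc and a unique incident lower arc, so there are only finitely many candidate cancelling pairs. For each candidate I would study the shadows on $S$, namely the boundary arcs $\beta_+=\partial D_+\cap S$ and $\beta_-=\partial D_-\cap S$ of the putative disks, and show, using the explicit combinatorics of Figure \ref{example}, that $\beta_+$ and $\beta_-$ cannot be made to meet only at the shared bridge point: in every case the arcs of $\mathcal{K}$ are interleaved on the punctured sphere $S$ so that any admissible $\beta_+$ and $\beta_-$ are forced to cross, contradicting $D_+\cap D_-=\{\text{one point}\}$.

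The main obstacle is exactly this case analysis, because a strict disk's boundary arc on $S$ is only required to avoid $\mathcal{K}$, not to be the shortest such arc; in principle $\beta_\pm$ could wind repeatedly around the other seven punctures, so the "obvious" short configurations are not the only ones to eliminate. I expect to handle the wound-up configurations by exhibiting an essential (incompressible and boundary-incompressible) surface $F$ in the exterior of $\mathcal{K}$, built into the construction of Figure \ref{example}, that meets $S$ in a controlled pattern, and then running an innermost-disk/outermost-arc argument on $D_+\cap F$ and $D_-\cap F$: a Type I move would either compress $F$ or reduce $|F\cap S|$, contradicting the essentiality and minimality of $F$. Equivalently, one tracks geometric intersection numbers on the $8$-punctured sphere and shows that the mutual disjointness demanded by a Type I move is incompatible with the linking pattern encoded in $\mathcal{K}$. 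Once every candidate pair is ruled out, $\kappa$ admits no Type I move and is locally minimal, which together with the first claim proves the theorem.
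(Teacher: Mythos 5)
Your first claim (non-global minimality) matches the paper: the paper simply observes that a $(\pi/2)$-rotation of $\kappa$ in Figure \ref{example} exhibits a $3$-bridge position of $\mathcal{K}$, which is your ``read off a $3$-bridge position'' step. The problem is the second claim, and it begins with your parenthetical dismissal: ``distance arguments are unavailable here (a high-distance bridge position would already be globally minimal, contradicting the first claim).'' This is exactly wrong, and it causes you to discard the tool that actually proves the theorem. The threshold for local minimality is very low: if a bridge position admits a Type I move, the cancelling pair of disks produces disjoint essential disks on opposite sides of the bridge sphere, so the Hempel distance is at most $1$ (this is Lemma \ref{minimal} in the paper, and it needs $n>2$ to find a lower disk disjoint from the upper one). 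Hence Hempel distance $\geq 2$ already forces local minimality, while global minimality would require far higher distance; there is no tension with the first claim. The paper's proof is precisely this: it constructs an explicit bridge diagram of $\kappa$ (the long sequence of figures) and verifies Takao's combinatorial ``well-mixed condition,'' which by Theorem \ref{criterion} certifies Hempel distance $>1$; indeed the paper notes the distance of $\kappa$ is exactly $2$.

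Your replacement argument does not close the gap you yourself identify. You concede that the shadows $\beta_\pm$ of putative cancelling disks can wind arbitrarily around the punctures, so the ``finite check'' over the eight candidate pairs is not finite at all; everything rests on the proposed essential surface $F$. But no such surface is exhibited, no reason is given that one exists with the required position relative to $S$, and the claimed mechanism --- that a Type I move would compress $F$ or reduce $|F\cap S|$ --- is unjustified and false in general: every knot, whatever essential surfaces its exterior contains, has stabilized bridge positions admitting Type I moves, so the mere presence of an essential surface cannot obstruct such a move. (Compare Theorem \ref{WR}: Hempel distance $1$ is compatible with, indeed partly characterized by, essential meridional surfaces.) Taming arbitrarily wound-up disk boundaries is exactly the problem that the curve-complex/distance machinery was built to solve, and the paper's well-mixed condition reduces it to a check on a concrete diagram; without that (or an equivalent quantitative tool), your sketch has no mechanism that rules out the complicated configurations.
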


\begin{figure}[ht]
\begin{minipage}{180pt}
\begin{center}
$\kappa $\\
\includegraphics[bb=0 0 180 130]{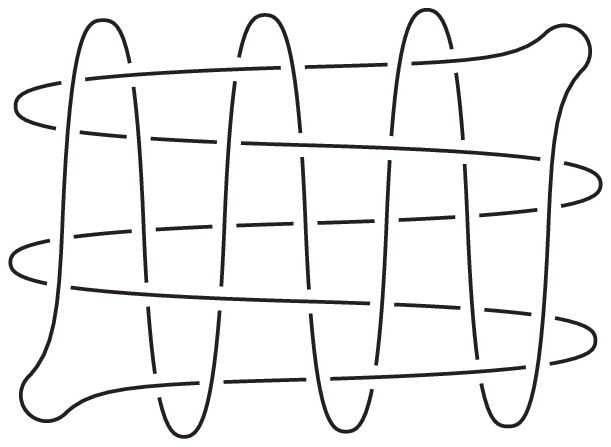}
\end{center}
\end{minipage}
\hspace{10pt}
{\LARGE $\xrightarrow{h}$}
\hspace{5pt}
\begin{minipage}{10pt}
\begin{center}
$\mathbb{R}$\\
\rotatebox{90}{$\xrightarrow{\hspace{120pt}}$}
\end{center}
\end{minipage}
\caption{A $4$-bridge position of a knot.}
\label{example}
\end{figure}

To prove this theorem, we show that the Hempel distance of the 4-bridge position is greater than $1$ by the method developed by the second author (\cite{Ta}).
This guarantees that the 4-bridge position is locally minimal (see Lemma \ref{minimal}).

On the other hand, Zupan showed that locally minimal, but not globally minimal Morse positions exist even if the knot is trivial.

\begin{theorem}[\cite{Z}]
There exists a locally minimal Morse position of the trivial knot which is not globally minimal.
\end{theorem}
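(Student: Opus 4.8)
The statement naturally splits into two independent assertions: that $\kappa$ admits no Type I move (local minimality) and that the bridge number of $\mathcal{K}$ is strictly less than $4$ (failure of global minimality). The plan is to treat these separately and by quite different means.

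For the failure of global minimality, the plan is to exhibit an honest bridge position of $\mathcal{K}$ with fewer than four bridges. Since $\kappa$ will be shown to admit no \emph{single} Type I move, such a simplification cannot proceed monotonically; instead I would first apply one or more stabilizations (inverse Type I moves) to $\kappa$, rearrange the resulting higher-bridge position by Type II moves and isotopies, and only then cancel several maximum--minimum pairs to reach a $3$-bridge (or lower) position. Concretely, I would track the knot type through the diagram of Figure \ref{example}, identify $\mathcal{K}$ with a knot of known small bridge number, and produce the corresponding low-bridge diagram explicitly. By Theorem \ref{fundamental1} the two presentations represent the same knot, so the bridge number of $\mathcal{K}$ is at most $3$, whence $\kappa$ is not globally minimal.

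For local minimality, the plan is to show that $\kappa$ admits no Type I move by bounding its Hempel distance from below. A Type I move requires a strict upper disk and a strict lower disk meeting in a single point of $k\cap S$; on the bridge sphere $S$, viewed as an $8$-punctured $2$-sphere, the boundaries of such disks determine curves lying in the upper disk set $\mathcal{U}$ and the lower disk set $\mathcal{L}$ respectively, and a cancelling pair forces these curves to be at distance at most $1$ in the curve complex $\mathcal{C}(S)$. Consequently, if the distance $d(\mathcal{U},\mathcal{L})$ exceeds $1$ then no Type I move exists; this is the content of Lemma \ref{minimal}. It therefore suffices to prove $d(\mathcal{U},\mathcal{L})\geq 2$, which I would do by the combinatorial method of \cite{Ta}: reading off from the diagram the arcs of the bridges and the curves their bridge disks cut on $S$, I would verify that no essential curve on $S$ is simultaneously disjoint from a curve of $\mathcal{U}$ and from a curve of $\mathcal{L}$.

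The main obstacle is exactly this lower bound on the distance. Establishing $d\geq 2$ is not a finite check against a single cancelling configuration but requires ruling out \emph{every} possible common neighbor in $\mathcal{C}(S)$; the technique of \cite{Ta} reduces this to an analysis of the intersection pattern between the defining surfaces, and carrying that analysis through for the specific eight-punctured diagram of Figure \ref{example} is the technical heart of the argument. Once $d\geq 2$ is in hand, local minimality follows from Lemma \ref{minimal}, and together with the low-bridge presentation above this yields a position that is locally but not globally minimal, proving the theorem.
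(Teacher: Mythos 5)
Your proposal proves the wrong statement. The theorem you were asked about is Zupan's result, which this paper does not prove but cites from \cite{Z}: there exists a locally minimal \emph{Morse} position of the \emph{trivial knot} which is not globally minimal. In that setting ``locally minimal'' means admitting neither a Type I \emph{nor} a Type II move, and ``globally minimal'' means realizing the \emph{width} of the knot, not its bridge number. What you have sketched instead is essentially the paper's main result (Theorem \ref{main}), concerning the $4$-bridge position $\kappa$ of the nontrivial knot $\mathcal{K}$: your two halves (bridge number of $\mathcal{K}$ at most $3$; Hempel distance of $\kappa$ at least $2$ via \cite{Ta} and Lemma \ref{minimal}) are precisely the paper's proof of Theorem \ref{main}, not of the statement at hand.

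Moreover, your strategy cannot be adapted to the actual statement. First, by Theorem \ref{Otal} every locally minimal \emph{bridge} position of the trivial knot is globally minimal, so any example witnessing Zupan's theorem must be a Morse position with thin spheres, i.e.\ not a bridge position at all; the entire framework you invoke (bridge spheres, Hempel distance, the well-mixed condition) is formulated only for bridge positions. Second, even where that machinery applies, Lemma \ref{minimal} rules out only Type I moves, whereas local minimality of a Morse position also requires excluding Type II moves, which no curve-complex distance bound on a single bridge sphere addresses. Third, the lower bound $d\geq 2$ you aim for is unattainable for the trivial knot: any of its bridge positions with more than one bridge admits a Type I move by Theorem \ref{Otal}, and hence has Hempel distance $1$. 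What the theorem actually requires is a construction of Zupan's type: an explicit Morse position of the unknot containing thin levels, together with an argument that no strict upper/lower disk pair realizes a Type I or Type II move, while its width strictly exceeds the minimal width of the trivial knot.
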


We remark that this answers Scharlemann's question \cite[Question 3.5]{S1}.
By using this example, Zupan showed that there exist infinitely many locally minimal, but not globally minimal Morse positions for any knot.

\section{Proof of the main theorem}

Note that Figure \ref{example} displays a $3$-bridge position of $\mathcal{K}$ after a $(\pi /2)$-rotation of $\kappa $, and so the $4$-bridge position is not globally minimal.
To prove that the $4$-bridge position is locally minimal, we apply the following:

\begin{lemma}\label{minimal}
An $n$-bridge position is locally minimal if it has Hempel distance greater than $1$.
\end{lemma}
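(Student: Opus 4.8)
The plan is to prove the contrapositive: if the $n$-bridge position admits a Type I move, then its Hempel distance is at most $1$. Recall that the Hempel distance is the distance, in the curve complex of the $2n$-punctured bridge sphere $\hat{S}=S\setminus k$, between the disk sets $\mathcal{D}_{+}$ and $\mathcal{D}_{-}$, where $\mathcal{D}_{\pm}$ consists of the essential curves of $\hat{S}$ bounding disks disjoint from $k$ in the ball above (resp.\ below) $S$. Thus it suffices to produce a curve of $\mathcal{D}_{+}$ and a curve of $\mathcal{D}_{-}$ that are disjoint.

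First I would unpack the data of a Type I move. It supplies a strict upper disk $D_{+}$ and a strict lower disk $D_{-}$ for $S$ with $D_{+}\cap D_{-}=\{p\}$ for a single point $p\in k\cap S$. The arc $\partial D_{+}\cap k$ is an upper bridge $\alpha_{+}$ with feet $p$ and $a$ on $S$, and $\partial D_{-}\cap k$ is a lower bridge $\alpha_{-}$ with feet $p$ and $b$; here $a\neq b$ unless $k$ is a trivial $1$-bridge circle. Taking the frontier of a regular neighborhood of $D_{+}$ in the upper ball yields a compressing disk $E_{+}$ disjoint from $k$ whose boundary $c_{+}:=\partial E_{+}$ encircles exactly the two feet $\{p,a\}$; hence $c_{+}\in\mathcal{D}_{+}$, and $c_{+}$ is essential whenever $n\geq 2$. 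Symmetrically, $D_{-}$ gives $c_{-}\in\mathcal{D}_{-}$ encircling $\{p,b\}$.

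The subtlety, which I expect to be the main obstacle, is that $c_{+}$ and $c_{-}$ are \emph{not} disjoint: both encircle the shared foot $p$, so they meet in two points, which by itself only bounds the distance by $2$. To sharpen this to $1$ I would split into cases. When $n\geq 3$, I count bridges: each puncture is a foot of exactly one lower bridge, so apart from $\alpha_{-}$ (which uses $p$) and the lower bridge incident to $a$, there remain $n-2\geq 1$ lower bridges whose feet all avoid $\{p,a\}$ and hence lie in the $(2n-2)$-punctured region $P$ that $c_{+}$ cuts off on the side away from $\{p,a\}$. The bridge-disk curve of such a bridge is a curve $c_{-}^{*}\in\mathcal{D}_{-}$ contained in $P$, hence disjoint from $c_{+}$, giving distance at most $1$. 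When $n=2$, the surface $\hat{S}$ is a $4$-punctured sphere, whose curve complex is conventionally the Farey graph, in which two curves are joined by an edge when they meet in the minimal possible number of points, namely two; since $c_{+}$ and $c_{-}$ realize this minimal intersection, they are joined by an edge and again the distance is at most $1$. The case $n=1$ is vacuous, as a $1$-bridge position admits no Type I move.

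I expect the bookkeeping in the case $n\geq 3$ to be the crux: one must check that the extracted lower bridge genuinely avoids the feet $\{p,a\}$ and that its bridge-disk curve is essential and isotopic off $c_{+}$, which reduces to the counting above together with the standard frontier-of-neighborhood construction of a compressing disk from a bridge disk. The remaining points — essentiality of $c_{\pm}$ for $n\geq 2$ and the Farey-graph convention for $n=2$ — are routine.
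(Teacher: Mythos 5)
Your overall strategy is the same as the paper's: prove the contrapositive, and for $n\geq 3$ extract a lower bridge whose feet avoid the two feet $\{p,a\}$ of the upper arc of the canceling pair, then take frontiers of regular neighborhoods to get disjoint curves in $\mathcal{D}_+$ and $\mathcal{D}_-$. The counting step is correct. The problem is the crux itself: your assertion that the bridge-disk curve of the extracted lower bridge is ``contained in $P$, hence disjoint from $c_+$'' does not follow from the feet lying in $P$, and your proposed reduction ``to the counting above together with the standard frontier-of-neighborhood construction'' does not supply it. A bridge disk for a lower bridge with both feet in $P$ can perfectly well have its shadow arc on $S$ crossing the shadow of $\alpha_+$ (hence $c_+$) many times --- this is the generic situation, as the heavily crossed bridge diagram of Figure \ref{diagram-7} illustrates. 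The counting only locates the bridge, and the frontier construction only converts a disk \emph{already known} to be disjoint from $D_+$ into a compressing disk; neither provides the isotopy or modification that makes the lower disk disjoint from $D_+$. Note also that the obvious ``standard'' fix, an innermost-disk/outermost-arc exchange on $D_-^*\cap D_+$, is not available here, because the two disks lie on opposite sides of $S$ and intersect only in points of $S$, not in arcs or circles.

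The paper fills exactly this gap with a construction you would need in some form: extend the strict lower disk $D^1_-$ of the canceling pair to a complete system of pairwise disjoint strict lower disks $D^1_-,\ldots,D^n_-$ (possible since $(B_-,\tau_-)$ is a trivial tangle), and then surger $D^2_-,\ldots,D^n_-$ by replacing subdisks of them with subdisks of $\partial(\eta(D_+\cup D^1_-))\cap B_-$, rerouting them around the ball $\eta(D_+\cup D^1_-)$; after this, every lower disk meets $D_+$ in at most the two punctures $p$ and $a$, and since $n>2$ at least one lower disk $D_-$ misses $D_+$ entirely, so the frontier construction applies to the pair $(D_+,D_-)$. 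Without this (or an equivalent) argument your proof is incomplete at its decisive step. A separate, minor point: your $n=2$ case is moot and slightly off --- the paper states the lemma only for $n>2$, and under the paper's definition of $\mathcal{C}(S\setminus k)$ (edges given by disjointness) the four-punctured sphere admits no disjoint pairs of non-isotopic essential curves, so invoking the Farey-graph convention changes the definition of Hempel distance rather than handling an additional case.
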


\begin{theorem}[\cite{Ta}]\label{criterion}
An $n$-bridge position has Hempel distance greater than $1$ if a bridge diagram of it satisfies the well-mixed condition.
\end{theorem}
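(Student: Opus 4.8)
The plan is to prove the contrapositive: if the Hempel distance of the bridge position is at most $1$, then its bridge diagram fails the well-mixed condition. Recall the setup. The bridge sphere $S$ meets $\mathcal{K}$ in $2n$ points, so $\Sigma = S \setminus \mathcal{K}$ is a $2n$-punctured sphere, and $S$ splits $S^3$ into balls $B_+$ and $B_-$, each meeting $\mathcal{K}$ in a trivial $n$-tangle $t_+$, $t_-$. The Hempel distance is the distance in the curve complex $\mathcal{C}(\Sigma)$ between the set $\mathcal{D}_+$ of essential curves bounding compressing disks in $B_+\setminus N(\mathcal{K})$ and the analogous set $\mathcal{D}_-$ for $B_-$. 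Distance at most $1$ means there exist curves $c_+ \in \mathcal{D}_+$ and $c_- \in \mathcal{D}_-$ that are isotopic or disjoint on $\Sigma$. The bridge diagram records the two systems of bridge arcs $\beta_+$ and $\beta_-$ (the shadows of the upper and lower tangle strands) on $\Sigma$, and the well-mixed condition is a combinatorial property of how $\beta_+$ and $\beta_-$ interleave. The goal is thus to convert a disjoint (or coincident) pair $(c_+,c_-)$ into a configuration of $\beta_\pm$ that the well-mixed condition forbids.

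First I would set up a canonical combinatorial description of the two disk sets in terms of the diagram. Since $B_+\setminus N(\mathcal{K})$ is a handlebody that deformation-retracts onto a spine built from $\beta_+$ and the punctures, a curve bounds a compressing disk in $B_+$ exactly when it can be read off from the arc system $\beta_+$, and symmetrically for $B_-$ and $\beta_-$. Concretely I would put $c_+$ and $c_-$ in minimal position with respect to $\beta_+\cup\beta_-$ and record how each curve runs relative to each arc system. This structural step, which is the input of \cite{Ta}, pins down $\mathcal{D}_\pm$ so that membership and disjointness become statements about the arcs and the complementary regions of $\beta_+\cup\beta_-$ on $\Sigma$.

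The main combinatorial step is then an innermost-disk / outermost-arc analysis. I would handle the distance-$0$ case first and directly: an isotopy class lying in both $\mathcal{D}_+$ and $\mathcal{D}_-$ produces a region of the diagram whose boundary uses arcs of only one system. For the distance-$1$ case, given disjoint $c_+$ and $c_-$, cut $\Sigma$ along $c_+$ and examine the position of $c_-$ and of the two arc systems in the pieces; an innermost intersection of $\beta_-$ (or $\beta_+$) with the cutting curve, together with the fact that $c_\pm$ bound compressing disks, yields a complementary region of $\beta_+\cup\beta_-$ bounded by arcs of a single system. An Euler-characteristic count over the complementary regions forces the existence of such a one-sided (unmixed) region or of a monochromatic bigon, which is precisely what the well-mixed condition excludes, giving the contradiction.

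I expect the main obstacle to be the control of the curves up to isotopy, that is, the honest proof that ``bounds a compressing disk'' is detected by a clean condition relative to the bridge arcs, and that the minimal-position normalization survives the cut-and-innermost reductions without creating new intersections. This is exactly where the well-mixed condition must be calibrated to be strong enough: it has to obstruct every innermost bigon and every one-sided region at once, and verifying that this obstruction is complete, so that no unmixed region can slip through when the diagram is well-mixed, is the technical heart of the argument.
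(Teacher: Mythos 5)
Your proposal aims at the right contrapositive (distance at most $1$ gives a disjoint pair of essential disks $E_+$, $E_-$, which should contradict well-mixedness), but note that the paper does not prove this theorem at all --- it is quoted from \cite{Ta} --- so the question is only whether your outline would close on its own. It would not, for a concrete reason: your final contradiction, an Euler-characteristic count producing a ``monochromatic bigon or one-sided complementary region'' of $\beta_+\cup\beta_-$, ``which is precisely what the well-mixed condition excludes,'' misstates the hypothesis. The well-mixed condition is not a prohibition on one-sided regions of the shadow complement. It is the requirement that, inside each family $\mathcal{A}_{i,j,\varepsilon}$ of parallel subarcs of the \emph{upper} shadows lying in the hemisphere $H_\varepsilon$ cut off by the auxiliary loop $l$ through the \emph{lower} shadows --- namely the subarcs separating $\delta_i$ from $\delta_j$ --- subarcs of every pair of distinct upper disks $D^r_+$, $D^s_+$ occur adjacently. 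A well-mixed diagram can perfectly well contain complementary regions bounded by arcs of a single system, so deducing the existence of such a region yields no contradiction; and your argument never engages $l$, the segments $\delta_i$, or the families $\mathcal{A}_{i,j,\varepsilon}$, which is where all the content of the hypothesis lives.

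The step you yourself flag as the ``main obstacle'' is also genuinely missing and is the technical heart: membership in the disk sets is \emph{not} detected by a clean disjointness condition relative to the shadow arcs (an essential disk in a tangle complement need not be isotopic off a fixed system of bridge disks), and even where normal forms exist, one cannot put $\partial E_+$ in minimal position with respect to the upper shadows and $\partial E_-$ with respect to the lower shadows \emph{simultaneously} while preserving $\partial E_+\cap\partial E_-=\emptyset$; resolving exactly this tension is what the well-mixed condition, formulated in terms of the fixed diagram data rather than normalized curves, is calibrated to do in \cite{Ta}. As it stands your text is a strategy sketch whose decisive steps are either deferred or aimed at a forbidden configuration that the hypothesis does not actually forbid. (A minor point: the distance-$0$ case you propose to treat separately never arises --- as the paper observes, since $k$ is a connected circle the Hempel distance is always at least $1$.)
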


\noindent
Here $n$ is an integer greater than $2$.
The notions of {\em Hempel distance}, {\em bridge diagram} and {\em well-mixed condition} are described in the following subsections.

\subsection{Hempel distance}

Suppose that $k$ is an $n$-bridge position of a knot $K$ and $S$ is a bridge sphere of $k$.
Let $B_+,\ B_-\subset S^3$ be the $3$-balls divided by $S$, and $\tau _\varepsilon $ be the $n$ arcs $k\cap B_\varepsilon $ for each $\varepsilon =\pm $.

Consider a properly embedded disk $E$ in $B_\varepsilon $.
We call $E$ an {\em essential disk} of $(B_\varepsilon ,\tau _\varepsilon )$ if $E$ is disjoint from $\tau _\varepsilon $ and $\partial E$ is essential in the $2n$-punctured sphere $S\setminus k$.
Here, a simple closed curve on a surface is said to be {\em essential} if it neither bounds a disk nor is peripheral in the surface.
The essential simple closed curves on $S\setminus k$ form a $1$-complex ${\mathcal C}(S\setminus k)$, called the {\em curve graph} of $S\setminus k$.
The vertices of ${\mathcal C}(S\setminus k)$ are the isotopy classes of essential simple closed curves on $S\setminus k$, and a pair of vertices spans an edge of ${\mathcal C}(S\setminus k)$ if the corresponding isotopy classes can be realized as disjoint curves.
The {\em Hempel distance} of $k$ is defined as
$${\rm min}\{ d([\partial E_+],[\partial E_-])\mid E_\varepsilon \text{ is an essential disk of }(B_\varepsilon ,\tau _\varepsilon )\text{ for each }\varepsilon =\pm .\} ,$$
where $d([\partial E_+],[\partial E_-])$ is the minimal distance between $[\partial E_+]$ and $[\partial E_-]$ measured in ${\mathcal C}(S\setminus k)$ with the path metric.

Assume that $k$ has Hempel distance $0$.
By the definition, there exist essential disks $E_+,\ E_-$ of $(B_+,\tau _+)$, $(B_-,\tau _-)$, respectively, such that $\partial E_+=\partial E_-$, which requires that $k$ is split.
Since the circle $k$ is connected, the Hempel distance is at least $1$.
The Hempel distance is $1$ if there exist essential disks $E_+,\ E_-$ of $(B_+,\tau _+),\ (B_-,\tau _-)$, respectively, such that $\partial E_+\cap \partial E_-=\emptyset$.
We can find such disks for a not locally minimal bridge position as follows:

\begin{proof}[Proof of Lemma \ref{minimal}]
Assume that the $n$-bridge position $k$ is not locally minimal.
By definition, there exist a strict upper disk $D_+\subset B_+$ and a strict lower disk $D^1_-\subset B_-$ for $S$ such that $D_+\cap D^1_-$ consists of a single point of $k\cap S$.
Note that $\tau _-$ is $n$ arcs each of which has a single local minimum.
We can choose strict lower disks $D^2_-,\ \ldots ,\ D^n_-\subset B_-$ for $S$ such that $D^1_-,\ D^2_-,\ \ldots ,\ D^n_-$ are pairwise disjoint.
Let $\eta (D_+\cup D^1_-)$ denote a closed regular neighborhood of $D_+\cup D^1_-$ in $S^3$.
By replacing subdisks of $D^2_-,\ \ldots ,\ D^n_-$ with subdisks of $\partial (\eta (D_+\cup D^1_-))\cap B_-$, we can arrange that $D^1_-,\ D^2_-,\ \ldots ,\ D^n_-$ are disjoint from $D_+$ except for the two points of $k\cap S$.
Since we assumed $n>2$, one of the strict lower disks, denoted by $D_-$, is disjoint from $D_+$.
The boundary of a regular neighborhood in $S^3$ of each $D_\varepsilon $ intersects $B_\varepsilon $ in an essential disk of $(B_\varepsilon ,\tau _\varepsilon )$.
They guarantee that the Hempel distance is $1$.
\end{proof}

\subsection{Bridge diagram}

We continue with the above notation.
There are $n$ pairwise disjoint strict upper (resp. lower) disks $D^1_+,\ D^2_+,\ \ldots ,\ D^n_+$ (resp. $D^1_-,\ D^2_-,\ \ldots $, $D^n_-$) for $S$.
The knot diagram of $K$ obtained by projecting $k$ into $S$ along these disks is called a {\em bridge diagram} of $k$.
In the terminology of \cite{CF}, $\tau _+,\ \tau _-$ are the overpasses and the underpasses of $k$.

Now let us describe how we can obtain a bridge diagram of the $4$-bridge position $\kappa $.
Isotope $\kappa $ as in Figure \ref{braid}, and start with a bridge sphere $S=S_0$.
There are canonical strict upper disks $D^1_+,\ D^2_+,\ D^3_+$ and $D^4_+$.
Figure \ref{diagram-1} illustrates a view of the arcs $D^1_+\cap S,\ D^2_+\cap S,\ D^3_+\cap S$ and $D^4_+\cap S$ on $S$ from $B_+$ side.
Shifting the bridge sphere $S$ to $S_1$, the arcs can be seen as in Figure \ref{diagram-2}.
Shifting $S$ further to $S_2$ and to $S_3$, the arcs are as in Figure \ref{diagram-3} and \ref{diagram-4}, respectively.
By continuing this process, the arcs are as in Figure \ref{diagram-5} when $S$ is at $S_{15}$.
The picture grows more and more complicated as $S$ goes down.
We include huge pictures in the back of this paper.
Figure \ref{diagram-6} illustrates the arcs when $S$ is at $S_{20}$, and finally Figure \ref{diagram-7} when $S$ has arrived at $S_{25}$.
Then we can find canonical strict lower disks $D^1_-,\ D^2_-,\ D^3_-,\ D^4_-$ and obtain a bridge diagram of $\kappa $.

\begin{figure}[ht]
\begin{minipage}{190pt}
\includegraphics[bb=0 0 170 305]{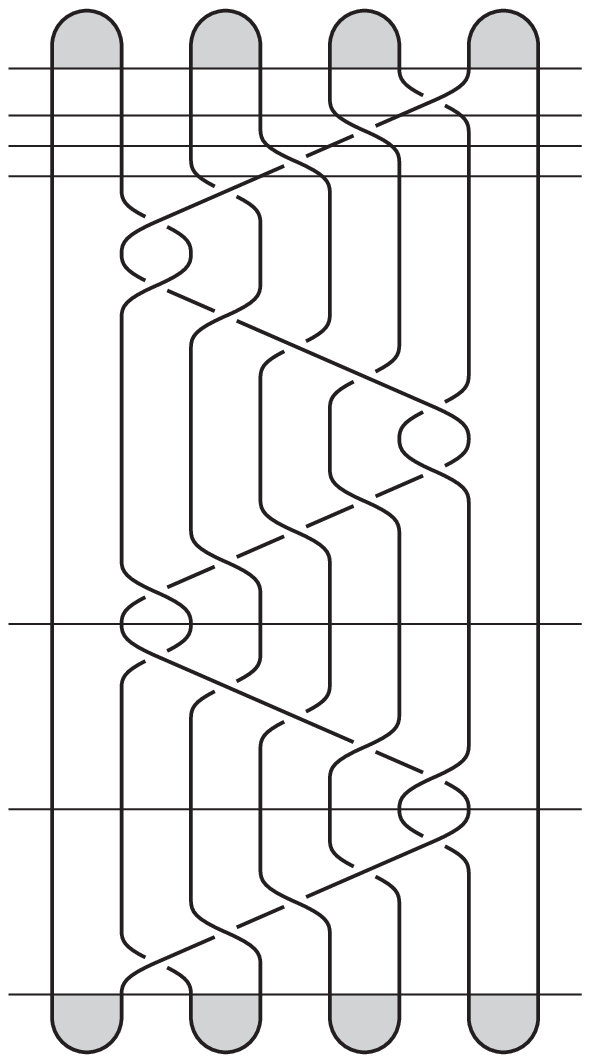}\\[-303pt]
\hspace*{18pt}$D^1_+$\hspace{25pt}$D^2_+$\hspace{25pt}$D^3_+$\hspace{25pt}$D^4_+$\\[-6pt]
\hspace*{170pt}$S_0$\\[2pt]
\hspace*{170pt}$S_1$\\[-2pt]
\hspace*{170pt}$S_2$\\[-2pt]
\hspace*{170pt}$S_3$\\[115pt]
\hspace*{170pt}$S_{15}$\\[41pt]
\hspace*{170pt}$S_{20}$\\[42pt]
\hspace*{170pt}$S_{25}$\\[-4pt]
\hspace*{18pt}$D^1_-$\hspace{25pt}$D^2_-$\hspace{25pt}$D^3_-$\hspace{25pt}$D^4_-$
\vspace{2pt}
\caption{A $4$-bridge position isotopic to $\kappa $.}
\label{braid}
\end{minipage}
\begin{minipage}{160pt}
\begin{center}
$D^1_+\cap S$\hspace{8pt}$D^2_+\cap S$\hspace{8pt}$D^3_+\cap S$\hspace{8pt}$D^4_+\cap S$\\[-15pt]
\includegraphics[bb=0 0 145 30]{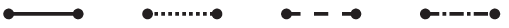}
\caption{The arcs in $S_0$.}
\label{diagram-1}
\vspace{30pt}
\includegraphics[bb=0 0 145 30]{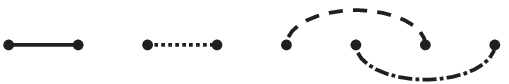}
\caption{In $S_1$.}
\label{diagram-2}
\vspace{30pt}
\includegraphics[bb=0 0 145 30]{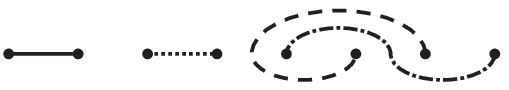}
\caption{In $S_2$.}
\label{diagram-3}
\vspace{30pt}
\includegraphics[bb=0 0 145 30]{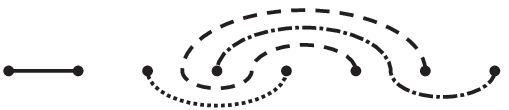}
\caption{In $S_3$.}
\label{diagram-4}
\end{center}
\end{minipage}
\end{figure}

\begin{figure}[ht]
\includegraphics[bb=0 0 260 150]{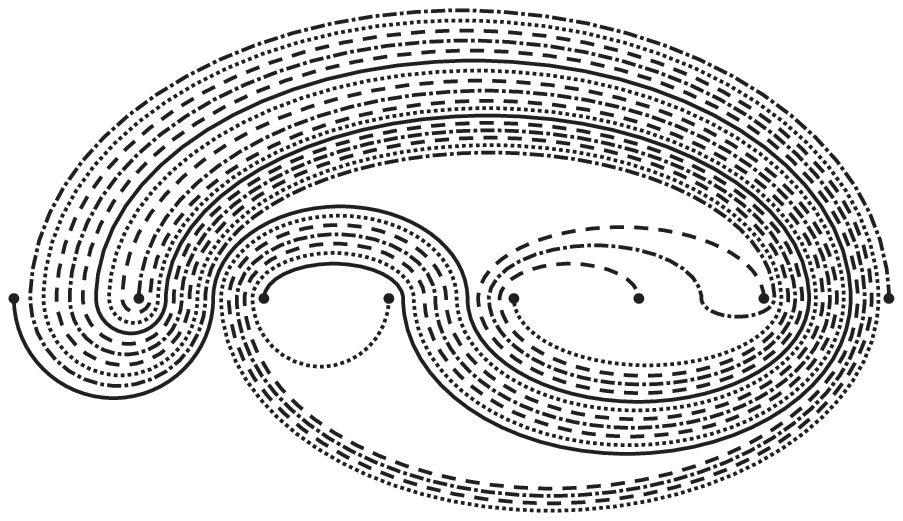}
\caption{In $S_{15}$.}
\label{diagram-5}
\end{figure}

\addtocounter{figure}{2}

\subsection{Well-mixed condition}

Suppose again that $k$ is an $n$-bridge position of a knot $K$ with $n>2$ and $S$ is a bridge sphere of $k$.
Let $B_+,\ B_-\subset S^3$ be the $3$-balls divided by $S$, and $\tau _\varepsilon $ be the $n$ arcs $K\cap B_\varepsilon $ for each $\varepsilon =\pm $.
Let $D^1_+,\ D^2_+,\ \ldots ,\ D^n_+$ and $D^1_-,\ D^2_-,\ \ldots ,\ D^n_-$ be strict upper and lower disks for $S$ determining a bridge diagram of $k$.

Let $l$ be a loop on $S$ containing the arcs $D^1_-\cap S,\ D^2_-\cap S,\ \ldots ,\ D^n_-\cap S$ such that the arcs are located in $l$ in that order.
We can assume that $D^1_+,\ D^2_+,\ \ldots ,\ D^n_+$ have been isotoped so that the arcs $D^1_+\cap S,\ D^2_+\cap S,\ \ldots ,\ D^n_+\cap S$ have minimal intersection with $l$.
For the bridge diagram of Figure \ref{diagram-7}, it is natural to choose $l$ to be the one which is seen as a horizontal line.
Let $H_+,\ H_-\subset S$ be the hemi-spheres divided by $l$ and let $\delta _i$ ($1\leq i\leq n$) be the component of $l\setminus (D^1_-\cup D^2_-\cup \cdots \cup D^n_-)$ which lies between $D^i_-\cap S$ and $D^{i+1}_-\cap S$.
(Here the indices are considered modulo $n$.)
Let ${\mathcal A}_{i,j,\varepsilon }$ be the collection of components of $(D^1_+\cup D^2_+\cup \cdots \cup D^n_+)\cap H_\varepsilon $ separating $\delta _i$ from $\delta _j$ in $H_\varepsilon $ for a distinct pair $i,\ j\in \{ 1,2,\ldots ,n\} $ and $\varepsilon \in \{ +,-\} $.
For example, Figure \ref{(1,2,+)} roughly displays ${\mathcal A}_{1,2,+}$ for the bridge diagram of Figure \ref{diagram-7}.
Note that ${\mathcal A}_{i,j,\varepsilon }$ consists of parallel arcs in $H_\varepsilon $.

\begin{figure}[ht]
\includegraphics[bb=0 0 330 140]{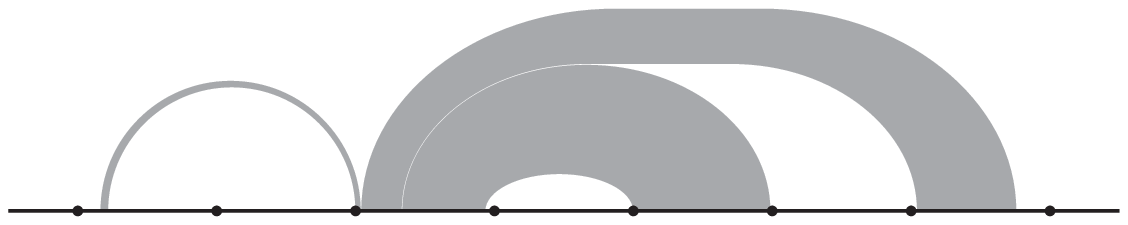}\\[-125pt]
\hspace*{-250pt}$H_+$\\[35pt]
\hspace*{10pt}$\delta _4$\hspace{10pt}$D^1_-\cap S$\hspace{21pt}$\delta _1$\hspace{18pt}$D^2_-\cap S$\hspace{20pt}$\delta _2$\hspace{18pt}$D^3_-\cap S$\hspace{21pt}$\delta _3$\hspace{18pt}$D^4_-\cap S$\hspace{11pt}$\delta _4$\hspace{8pt}\raisebox{9pt}{$l$}\\[17pt]
\hspace*{-250pt}$H_-$\\[5pt]
\caption{The collection ${\mathcal A}_{1,2,+}$ of arcs, which looks like gray bands.}
\label{(1,2,+)}
\end{figure}

\begin{definition}
\begin{enumerate}
\item A bridge diagram satisfies the {\em $(i,j,\varepsilon )$-well-mixed condition} if in ${\mathcal A}_{i,j,\varepsilon }\subset H_\varepsilon $, a subarc of $D^r_+\cap S$ is adjacent to a subarc of $D^s_+\cap S$ for every distinct pair $r,\ s\in \{ 1,2,\ldots ,n\} $.
\item A bridge diagram satisfies the {\em well-mixed condition} if it satisfies the $(i,j,\varepsilon )$-well-mixed condition for every combination of a distinct pair $i,\ j\in \{ 1,2,\ldots ,n\} $ and $\varepsilon \in \{ +,-\} $.
\end{enumerate}
\end{definition}

By making Figure \ref{(1,2,+)} detailed, one can check the $(1,2,+)$-well-mixed condition for the bridge diagram of Figure \ref{diagram-7}.
One can also check the $(i,j,\varepsilon )$-well-mixed condition for all the other $(i,j,\varepsilon )$ to complete the well-mixed condition.
By Theorem \ref{criterion}, the Hempel distance of $\kappa $ is greater than $1$.
By Lemma \ref{minimal}, we conclude the proof of Theorem \ref{main}.

We would like to remark that the Hempel distance of $\kappa $ is exactly $2$.
Notice that the boundary of a regular neighborhood in $S$ of the closure of $\delta _3$ is a simple closed curve disjoint from both $D^1_+$ and $D^1_-$.
Note that the boundary of a regular neighborhood in $S^3$ of each $D^1_\varepsilon $ intersects $B_\varepsilon $ in an essential disk of $(B_\varepsilon ,\tau _\varepsilon )$.
They guarantee that the Hempel distance is at most $2$.

\section{Related results and further directions}

\subsection{The knot of our example}

Figure \ref{example} shows that the bridge number of $\mathcal{K}$ is at most $3$.
Since any locally minimal bridge position of any 2-bridge knot is globally minimal (\cite{O2}, \cite{ST}), the bridge number of $\mathcal{K}$ is equal to $3$.
Since $\mathcal{K}$ has a $4$-bridge position with the Hempel distance $2$, it is a prime knot by the following:

\begin{theorem}\label{composite}
A bridge position of a composite knot has Hempel distance $1$.
\end{theorem}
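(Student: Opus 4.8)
The plan is to prove the contrapositive: assuming a bridge position $k$ of $K$ admits a bridge sphere $S$ of Hempel distance at least $2$, I would show that $K$ is prime. The standard strategy is to suppose for contradiction that $K$ is composite, so there is a decomposing $2$-sphere $P$ meeting $K$ transversally in exactly two points, separating $S^3$ into two balls each containing a nontrivially knotted arc. The goal is to extract from $P$ an essential disk on one of the two sides of $S$ whose boundary is disjoint from (or can be pushed off) an essential disk on the other side, forcing the Hempel distance down to $1$ and contradicting the hypothesis.

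First I would put $P$ in a generic position with respect to the height function and make $P\cap S$ a collection of simple closed curves, then minimize $|P\cap S|$ over the isotopy class of $P$ (rel the two points $P\cap K$). The key reduction is to eliminate curves of $P\cap S$ that are inessential in the $2$-punctured-sphere sense. An innermost trivial curve of $P\cap S$ on $P$ bounds a subdisk of $P$; because $P$ meets $K$ in only two points, I can classify such a subdisk as either disjoint from $K$ or meeting $K$ in a single point, and in each case either compress $P$ or cap off to remove the intersection curve, lowering $|P\cap S|$. Carrying out this surgery without creating new intersections is the routine but careful part.

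The heart of the argument is the following: after the reduction, $P\cap S\neq\emptyset$ (since $P$ cannot lie in a single ball while separating the two bridge arcs of $K$ on that side). Take a curve $c$ of $P\cap S$ that is innermost on $S$, bounding a disk $E\subset S$; analyzing whether $E$ meets $K$ lets me build, on each side $B_+$ and $B_-$, a disk whose boundary is essential in $S\setminus k$. Because these two boundary curves both arise from the single sphere $P$ and can be realized disjointly (they are parallel copies of components of $P\cap S$, or are separated by the two-point intersection locus), their distance in the curve graph is at most $1$. The main obstacle I anticipate is precisely this last step: ensuring that the disks produced are genuinely \emph{essential} (their boundaries neither bound disks nor are peripheral in the $2n$-punctured sphere) rather than trivial, since a composite knot's decomposing sphere can intersect $S$ in ways that yield only peripheral or trivial curves. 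Handling the case where every candidate curve is peripheral — which would correspond to $P$ being isotopic across $S$ to a nearly trivial position — requires using the hypothesis $n>2$ together with the fact that the summand arcs are knotted, guaranteeing at least one essential compressing disk on each side disjoint from the other, and thus contradicting distance at least $2$.
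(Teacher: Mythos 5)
You take the same overall route as the paper --- produce from a decomposing sphere $P$ two essential disks, one on each side of the bridge sphere $S$, with disjoint boundaries --- but the step you yourself flag as the heart of the argument is exactly where your plan breaks down. After $P\cap S$ has been simplified (the paper cites Schubert's and Schultens' arguments, from the proofs of bridge-number additivity, to make $P\cap S$ a \emph{single} loop $c$), the two subdisks $P\cap B_+$ and $P\cap B_-$ each meet $k$ in exactly one of the two points of $k\cap P$. Consequently no subdisk of $P$, and no disk bounded in $B_\pm$ by a parallel copy of a component of $P\cap S$ (such a disk is parallel to one of those punctured subdisks), is disjoint from $\tau_\pm$; none of them is an essential disk of $(B_\pm,\tau_\pm)$ in the required sense. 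Your justification of disjointness --- ``they are parallel copies of components of $P\cap S$'' --- therefore rests on disks that do not qualify, and your other candidate, a curve of $P\cap S$ innermost on $S$ bounding a disk $E\subset S$, lies in neither $B_+$ nor $B_-$, so it cannot serve as either disk.

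The missing idea is a construction of the disks away from $P$, on opposite sides of it. Once $P\cap S=c$ is a single loop with one point of $k\cap P$ above $S$ and one below, the knottedness of each summand forces the summand arc in each side $V_i$ of $P$ to cross $S$ at least three times (an arc crossing $S$ once is unknotted); hence each $V_i$ contains a complete bridge arc of $\tau_+$ and a complete bridge arc of $\tau_-$. The frontier of a regular neighborhood of a complete upper arc in $V_1$ together with a bridge disk for it (which this position allows us to take inside $V_1$) is a disk $E_+\subset B_+\cap V_1$ disjoint from $\tau_+$, whose boundary bounds a twice-punctured disk in $S$ and a $(2n-2)$-punctured disk, hence is essential; doing the same with a complete lower arc in $V_2$ gives $E_-\subset B_-\cap V_2$. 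Disjointness of $\partial E_+$ and $\partial E_-$ is then automatic because the two disks lie on opposite sides of $P$ --- this is what the paper's phrase ``in the opposite sides of the decomposing sphere'' means. Two further cautions: the reduction of $P\cap S$ that you call ``routine but careful'' is precisely the nontrivial content the paper outsources to Schubert and Schultens (surgery must be performed so as to keep $P$ a decomposing sphere, and must be pushed all the way to a single loop, or at least to a position from which disks exist on \emph{both} sides); and $n>2$ is not a hypothesis available to you --- rather, it holds automatically, since a composite knot has bridge number at least $3$.
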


\begin{proof}
Let $k$ be an $n$-bridge position of a composite knot and $S$ be a bridge sphere of $k$.
Let $B_+,\ B_-\subset S^3$ be the $3$-balls divided by $S$, and $\tau _\varepsilon $ be the $n$ arcs $k\cap B_\varepsilon $ for each $\varepsilon =\pm $.
By the arguments in \cite{Sc}, \cite{Sch1}, it follows that any decomposing sphere for $k$ can be isotoped so that it intersects $S$ in a single loop.
Then, in the opposite sides of the decomposing sphere, there are two essential disks $E_+,\ E_-$ of $(B_+,\tau _+),\ (B_-,\tau _-)$ respectively such that $\partial E_+\cap \partial E_-=\emptyset$.
This shows that the Hempel distance is $1$.
\end{proof}

\noindent
Furthermore, $\mathcal{K}$ is hyperbolic since any locally minimal bridge position of any torus knot is globally minimal (\cite{OZ2}).
Thus, $\mathcal{K}$ is a hyperbolic 3-bridge knot which admits a $4$-bridge position with the Hempel distance 2.

We expect that not only $\kappa $ may be an example for Theorem \ref{main} but also many $4$-bridge positions of knots with the same projection image as that of Figure \ref{example}.
However only finitely many knots have the same projection image, and we would like to ask the following problem.

\begin{problem}
For an integer $n>3$, can we generate infinitely many $n$-bridge positions which are locally minimal, but not globally minimal?
\end{problem}

We further expect that for some integers $n>m\geq 3$, we can find a locally minimal $n$-bridge position of an $m$-bridge knot which has a similar projection image as that of Figure \ref{example}.
However it seems difficult to find more than two locally minimal bridge positions of such a knot, and we would like to ask the following problem.

\begin{problem}
Does any knot have infinitely many locally minimal bridge positions?
\end{problem}

It should be remarked that there exist only finitely many bridge positions of given bridge numbers for a hyperbolic knot (\cite{C}).
In particular, there are finitely many globally minimal bridge positions of a hyperbolic knot.
It should be also remarked that multiple bridge surfaces restrict Hempel distances (\cite{T2}).

\subsection{Essential surfaces}

Composite knots are a simple example of knots with essential surfaces properly embedded in the exteriors of their representatives.
Theorem \ref{composite} suggests that essential surfaces restrict Hempel distances.
Bachman--Schleimer showed it in general.

\begin{theorem}[\cite{BS}]\label{bound}
Let $F$ be an orientable essential surface properly embedded in the exterior of a bridge position $k$ of a knot.
Then the Hempel distance of $k$ is bounded above by twice the genus of $F$ plus $|\partial F|$.
\end{theorem}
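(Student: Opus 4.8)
The plan is to adapt the sweep-out argument of Hartshorn and Scharlemann--Tomova to the bridge setting. First I would fix a sweep-out $\{S_t\}_{t\in[-1,1]}$ of $S^3$ realizing the bridge splitting $S^3=B_+\cup_S B_-$, where $S_0=S$, each $S_t$ with $-1<t<1$ is a bridge sphere for $k$ isotopic to $S$, and $S_{\pm 1}$ degenerate to spines of the trivial tangles $(B_\pm,\tau_\pm)$. Because every level is a bridge sphere, the arcs $k\cap f^{-1}([t,1])$ form a trivial $n$-tangle isotopic to $\tau_+$ for all $t\in(-1,1)$, so the identification $S_t\setminus k\cong S\setminus k$ carries along the two disk sets; write $\mathcal D_\varepsilon$ for the set of boundary curves of essential disks of $(B_\varepsilon,\tau_\varepsilon)$, so that the Hempel distance is $d(\mathcal D_+,\mathcal D_-)$. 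After an isotopy making the height function $f|_F$ induced by the sweep-out Morse with distinct critical values, I would minimize $|F\cap S_t|$ in the isotopy class of $F$; using that $F$ is essential (hence incompressible and boundary-incompressible), innermost-disk and outermost-arc arguments discard every component of $F\cap S_t$ that is inessential in $S_t\setminus k$, leaving for each regular $t$ a set of circles (and, where $\partial F$ crosses the meridians $S_t\cap\partial N(k)$, arcs) that determine curves of $\mathcal C(S\setminus k)$.

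Next I would label each regular value: call $t$ a \emph{$+$-value} if the upper subsurface $F\cap f^{-1}([t,1])$ admits a compression in $B_+$, away from $\tau_+$, producing a genuine element of $\mathcal D_+$, and define \emph{$-$-values} symmetrically below $S_t$ using $(B_-,\tau_-)$. The two end claims I would establish are that values near $t=+1$ are $+$-values and values near $t=-1$ are $-$-values: near the top spine the surviving caps of $F$ (disks lying just above $S_t$, at the highest maxima of $f|_F$) are disks in $B_+$ whose essential boundaries, missing $\tau_+$, lie in $\mathcal D_+$, and symmetrically at the bottom. Essentiality of $F$ — in particular that $F$ is isotopic into neither $B_+$ nor $B_-$ and is not boundary-parallel — is what guarantees the relevant caps exist and survive the cleanup, so the label must change as $t$ decreases across $(-1,1)$.

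The core step is to read a path in $\mathcal C(S\setminus k)$ off the transition. At a value, or a pair of values straddling a single critical level, where the labelling switches, I would extract an essential disk $E_+$ of $(B_+,\tau_+)$ from the upper subsurface and an essential disk $E_-$ of $(B_-,\tau_-)$ from the lower one. The point is that $\partial E_+$ is obtained from the curves of $F\cap S_t$ by a bounded sequence of compressions and boundary-compressions of the upper subsurface, $\partial E_-$ likewise from below, and all the intermediate essential circles of $F\cap S_t$ are mutually disjoint on $S\setminus k$; concatenating these gives an edge-path from $[\partial E_+]\in\mathcal D_+$ to $[\partial E_-]\in\mathcal D_-$. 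Bounding its length is where $2g(F)+|\partial F|$ appears: writing $F_+=F\cap f^{-1}([t,1])$ and $F_-=F\cap f^{-1}([-1,t])$, one charges each interior compression to a handle and each boundary-compression to a boundary component, so the two sides contribute at most $1-\chi(F_+)$ and $1-\chi(F_-)$, which sum to $2-\chi(F)=2g(F)+|\partial F|$.

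I expect the main obstacle to be precisely this bookkeeping in tandem with the interaction with $k$. One must ensure that every disk extracted from $F$ is a bona fide essential disk of a tangle — disjoint from $\tau_\pm$ and neither trivial nor peripheral in $S\setminus k$ — rather than a meridional or boundary-parallel piece, and that no isotopy class in the connecting path is counted twice across a saddle. The delicate end is the boundary: the arcs of $F\cap S_t$ arising where $\partial F\subset\partial N(k)$ meets $S_t\cap\partial N(k)$ must be treated on the same footing as circles, since handling boundary-compressions along $\partial F$ is exactly what upgrades the closed-surface bound $2g(F)$ to the sharp $2g(F)+|\partial F|$. Verifying the end-behavior of the labelling and carrying out this charging argument cleanly is the heart of the proof.
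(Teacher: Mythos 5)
You should first be aware that the paper contains no proof of this statement: Theorem \ref{bound} is imported verbatim from Bachman--Schleimer \cite{BS}, stated only to contextualize the properties of the knot $\mathcal{K}$, so the only meaningful comparison is with the proof in \cite{BS} itself. Measured against that, your outline follows the same route as the original: \cite{BS} runs a Hartshorn-style sweep-out argument, labelling generic levels of the sweep-out according to the side on which the surface compresses, extracting from the transition an edge-path in the curve complex of the punctured bridge sphere, and bounding its length by an Euler characteristic count; and your observation that boundary-compressions along $\partial F$ are what upgrade Hartshorn's closed-surface bound $2g(F)$ to $2g(F)+|\partial F|=2-\chi(F)$ is exactly the respect in which \cite{BS} goes beyond Hartshorn.

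That said, as a proof rather than a roadmap the proposal has genuine gaps, which you yourself flag but do not close, and they are not routine bookkeeping: they are the entire content of the theorem. Concretely: (i) the end-behavior claim needs an argument that $F$ cannot be isotoped off either spine of the sweep-out --- this is where essentiality enters, via the fact that the complement of a trivial $n$-tangle in a ball is a genus-$n$ handlebody, which admits no closed incompressible surface, together with a separate argument for the $\partial$-incompressible case when $\partial F\neq\emptyset$; without this, your labels near $t=\pm1$ may simply be undefined because $F\cap S_t$ is empty or entirely inessential. (ii) The charging scheme is not yet an argument: the number of saddles of $f|_F$ equals $-\chi(F)$ plus the number of local extrema, which generically exceeds $2-\chi(F)$, so one cannot bound the path length by ``one edge per critical point''; one must either isotope $F$ to a controlled Morse position (minimal extrema, which may conflict with the position minimizing $|F\cap S_t|$) or, as in \cite{BS}, extract the path not from the sweep-out history but from a maximal sequence of compressions and $\partial$-compressions of the upper and lower pieces $F_\pm$ at a single transition level, using the standard fact that each such move changes the boundary curve system by distance at most $1$ in $\mathcal{C}(S\setminus k)$ while increasing $\chi$ by at least $1$. (iii) It must be verified that the disks produced at the ends of these compression sequences are essential disks of the tangles $(B_\pm,\tau_\pm)$ --- disjoint from $\tau_\pm$ with boundary neither trivial nor peripheral in $S\setminus k$ --- and that the label must actually change somewhere; ruling out levels that carry neither label (or both) is a case analysis you have not addressed. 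So your plan is the right one, and it is the one \cite{BS} executes, but the three points above are precisely where all the work lies.
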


By Theorem \ref{bound}, if a knot exterior contains an essential annulus or an essential torus, then the Hempel distance of a bridge position is at most 2.
Therefore, if there exists a bridge position of a knot with the Hempel distance at least 3, then the knot is hyperbolic.
The properties of our knot $\mathcal{K}$ can be compared with it.

A knot without an essential surface with meridional boundary in the exterior of its representative is called a {\em meridionally small knot}.
For example, the trivial knot, 2-bridge knots and torus knots are known to be meridionally small.
As we mentioned in Section \ref{intro}, these knots also have the nice property that any nonminimal bridge position is stabilized.
We say that a knot is {\em destabilizable} if it has this property.
Zupan showed that any cabled knot $J$ of a meridionally small knot $K$ is also meridionally small, and that if $K$ is destabilizable, then $J$ is also destabilizable (\cite{Z2}).
Then, the following problem is naturally proposed.

\begin{problem}\label{small}
Is there a relation between meridionally small knots and destabilizable knots?
\end{problem}

We remark that a bridge position of a meridionally small knot is locally minimal if and only if the Hempel distance is greater than $1$ by Lemma \ref{minimal} and the following fundamental result:

\begin{theorem}[\cite{HS2}, \cite{T}]\label{WR}
If a bridge position of a knot has Hempel distance $1$, then either it is stabilized or the knot exterior contains an essential surface with meridional boundary.
\end{theorem}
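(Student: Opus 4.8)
The plan is to reformulate the hypothesis in the knot exterior $E(K)=S^3\setminus N(k)$ and run a Casson--Gordon style untelescoping. Writing $\hat S=S\cap E(K)$, the bridge sphere becomes a $2n$-punctured sphere that splits $E(K)$ into two genus-$0$ compression bodies $C_+,C_-$, where $C_\varepsilon=B_\varepsilon\setminus N(\tau_\varepsilon)$. Since $k$ has Hempel distance $1$, there are essential disks $E_+,E_-$ of $(B_+,\tau_+),(B_-,\tau_-)$ with $\partial E_+\cap\partial E_-=\emptyset$; in the exterior these are compressing disks for $\hat S$ lying in $C_+$ and $C_-$ respectively, with disjoint boundaries. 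Thus $\hat S$ is a \emph{weakly reducible} bridge surface, and the whole point is to extract from this weak reducibility either a canceling pair of strict upper and lower disks (a stabilization) or an essential meridional surface.

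First I would extend $E_+$ to a maximal collection $\Delta_+$ of pairwise disjoint compressing disks in $C_+$ and $E_-$ to a maximal collection $\Delta_-$ in $C_-$, subject to $\partial\Delta_+\cap\partial\Delta_-=\emptyset$; this is possible because $\partial E_+$ and $\partial E_-$ are already disjoint and essential on $\hat S$. Compressing $\hat S$ simultaneously along $\Delta_+\cup\Delta_-$ produces a generalized (multiple) bridge surface whose components split naturally into thick surfaces $\Sigma_{\mathrm{thick}}$ and thin surfaces $\Sigma_{\mathrm{thin}}$, exactly as in the untelescoping of a weakly reducible splitting. Because every compression is disjoint from $k$, all boundary curves created on $\partial N(k)$ remain meridians, so each component of $\Sigma_{\mathrm{thin}}$ has only meridional boundary.

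The heart of the argument is the dichotomy for the thin surfaces, which I would establish by the Casson--Gordon incompressibility argument adapted to bridge surfaces as in \cite{HS2} and \cite{T}. If some thin component were compressible in $E(K)$, an innermost compressing disk, being disjoint from one of the families, could be added to $\Delta_+$ or $\Delta_-$ and would contradict maximality (unless it is absorbed by an adjacent thick surface, in which case one follows the induced reduction). It then remains to handle the degenerate thin components. A thin component that is a sphere meeting $k$ in at most two points, or an annulus parallel into $\partial N(k)$, is traced back through the compressions: either one side is a trivial ball--arc pair, which produces a strict upper disk $D_+$ and a strict lower disk $D_-$ with $D_+\cap D_-$ a single point of $k\cap S$ and hence a Type I move certifying that $k$ is stabilized, or the component is essential, in which case it is itself an essential meridional surface. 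If no such reduction or degeneracy occurs, some thin component is incompressible, boundary-incompressible, and non-peripheral, and by the previous step its boundary is meridional, giving the essential-surface alternative.

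The main obstacle I anticipate is the bookkeeping in this last dichotomy: proving that a thin surface which fails to be essential genuinely forces a \emph{canceling} pair of upper and lower disks (meeting in exactly one point of $k\cap S$) rather than merely the disjoint essential disks we started with. This is where the connectedness of $k$ and the genus-$0$ nature of the compression bodies must be used carefully, since ruling out every degenerate thin component and converting each into an explicit Type I move is delicate; the disjoint essential disks coming from distance $1$ only provide the \emph{start} of the untelescoping, and the conversion of ``nothing essential survives'' into an actual stabilization is where the real work lies.
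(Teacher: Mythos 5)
A preliminary remark on the comparison itself: the paper does not prove Theorem \ref{WR}; it is quoted as a known result of Hayashi--Shimokawa \cite{HS2} and Tomova \cite{T}, so the only proofs to measure your outline against are the ones in those cited papers. Your strategy --- interpret distance $1$ as weak reducibility of the punctured bridge sphere, untelescope along disjoint disk families, and read the dichotomy off the thin surfaces --- is indeed the strategy of \cite{HS2} and \cite{T}, so the approach is the right one.

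As a proof, however, there is a genuine gap, and it is exactly the one you flag in your last paragraph: the conversion of ``no thin surface survives as an essential meridional surface'' into an explicit canceling pair of strict upper and lower disks meeting in one point of $k\cap S$ is the entire content of the theorem, and your outline defers it rather than carries it out. Two concrete points make this more than bookkeeping. First, the Casson--Gordon step does not transfer verbatim to punctured surfaces: a compressing disk for a thin level need not be disjoint from the thick levels, so ``add it to $\Delta_+$ or $\Delta_-$ and contradict maximality'' does not typecheck as stated; the standard repair is to choose the weak reduction minimizing a complexity of the compressed surface, and in the punctured setting one must additionally control \emph{cut disks} (disks meeting the knot in a single point), i.e.\ meridional compressibility, which your families $\Delta_\pm$, being disjoint from $k$, never see --- without this, a surviving thin surface may be incompressible yet meridionally compressible, hence not essential in the sense required. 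Second, the untelescoping machinery of \cite{HS2} and \cite{T} naturally outputs more alternatives than the two in the statement (perturbed, cancellable, meridionally stabilized, a removable component, a nontrivial thin decomposition), and part of the work behind Theorem \ref{WR} as stated here is showing that, for a knot in $S^3$ with a bridge sphere, all of these collapse into ``stabilized'' or ``essential meridional surface.'' So your proposal is a correct plan whose hard steps are, in effect, citations to the very papers the theorem is attributed to.
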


On the other hand, it is not always true that if the knot exterior contains an essential surface with meridional boundary, then a bridge position has Hempel distance $1$.
For example, \cite[Example 5.1]{HK} shows that a $3$-bridge position of $8_{16}$ has Hempel distance greater than $1$, but the knot exterior contains an essential surface with meridional boundary.

\subsection{Distance between bridge positions}

Theorem \ref{fundamental1} allows us to define a distance between two bridge positions of a knot, which we call the {\em Birman distance}.
That is to say, the Birman distance between two bridge positions is the minimum number of Type I moves and the inverse operations relating the bridge positions up to isotopy.
For example, the Birman distance between an $n$-bridge position and an $m$-bridge position of the trivial knot is always $|n-m|$ by Theorem \ref{Otal}.
The Birman distance between $\kappa $ and the $3$-bridge position of $\mathcal{K}$ is at least $3$ since $\kappa $ is locally minimal.
In fact, we can see that it is at most $5$ by observing the $(\pi /2)$-rotation of $\kappa $.

Johnson--Tomova gave an upper bound for the Birman distance between two bridge positions with high Hempel distance which are obtained from each other by flipping, namely the rotation of $S^3$ exchanging the poles.

\begin{theorem}[\cite{JT}]
For an integer $n\geq 3$, if an $n$-bridge position $k$ of a prime knot has Hempel distance at least $4n$, then the Birman distance between $k$ and the flipped bridge position of $k$ is $2n-2$.
\end{theorem}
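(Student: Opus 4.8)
The plan is to prove matching upper and lower bounds of $2n-2$ on the Birman distance, with the lower bound the substantial half. Write $\bar k$ for the flip of $k$. Since flipping exchanges the two $3$-balls $B_+,\ B_-$ cut off by the bridge sphere, it merely interchanges the two essential-disk sets used to define the Hempel distance; hence $\bar k$ is again an $n$-bridge position with Hempel distance at least $4n$, and in particular both $k$ and $\bar k$ are unstabilized by Lemma~\ref{minimal}. It is convenient to factor the problem through the \emph{stable bridge number} $\mathrm{sb}$, the least bridge number of a bridge position that is simultaneously a stabilization of $k$ and of $\bar k$. On one hand, any Type~I path of length $D$ from $k$ to $\bar k$ can be combed, by a standard untelescoping argument, into a single path that stabilizes up to a common stabilization and then destabilizes down; the resulting common stabilization has bridge number at most $n+D/2$, so $D\ge 2(\mathrm{sb}-n)$. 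On the other hand an up-then-down path of length $2(\mathrm{sb}-n)$ bounds $D$ from above. Thus the theorem is equivalent to the statement $\mathrm{sb}=2n-1$.

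For the inequality $\mathrm{sb}\le 2n-1$ I would exhibit an explicit common stabilization. Stabilizing $k$ a total of $n-1$ times, each time inserting a small cancelling maximum--minimum pair so as to make the $n$ upper arcs and the $n$ lower arcs interleave symmetrically across the bridge sphere, produces a $(2n-1)$-bridge position that is invariant up to isotopy under the flip. The same position is therefore also reached by stabilizing $\bar k$ exactly $n-1$ times, so it is a common stabilization and $\mathrm{sb}\le 2n-1$.

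The core of the proof is the reverse inequality $\mathrm{sb}\ge 2n-1$, equivalently that $k$ and $\bar k$ admit no common stabilization of bridge number at most $2n-2$. Here I would use that high Hempel distance rigidifies a bridge surface against comparison with other bridge surfaces, in the spirit of the sweep-out arguments of \cite{Ta} and the multiple-bridge-surface estimates of \cite{T2}. Concretely, suppose $R$ were a common stabilization with bridge number $m\le 2n-2$. Comparing a sweep-out associated with $R$ against one associated with $k$ yields a graphic whose structure is controlled by the Hempel distance of $k$; because the number of bridges of $R$ is small relative to $4n$, the distance hypothesis forces the disk sets of $R$ to lie within bounded curve-graph distance of those of $k$, and likewise of those of $\bar k$. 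Tracking the two destabilizing systems inside $R$ that recover $k$ and $\bar k$ respectively, the flip symmetry then produces essential disks on opposite sides at curve-graph distance at most a fixed multiple of $m<2n$, contradicting the bound $4n$ on the Hempel distance of $k$. The numerical threshold $4n$ is exactly what is needed so that this comparison applies to every bridge surface of bridge number at most $2n-2$.

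The main obstacle is this last step: making the sweep-out comparison precise enough to extract the quantitative distance bound, and in particular pinning down the constant so that $4n$ suffices. A secondary technical point is the confluence lemma used in the reduction, namely that an arbitrary zig-zag of Type~I moves can be replaced by an up-then-down path producing a common stabilization of bridge number at most $n+D/2$; this must be verified independently of the distance hypothesis.
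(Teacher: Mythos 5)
This theorem is not proved in the present paper at all: it is stated as a quoted result of Johnson--Tomova \cite{JT}, so the benchmark for your proposal is their argument, not anything in this text. Your skeleton does echo theirs: their paper is precisely about the stable bridge number (the minimal bridge number of a common perturbation of $k$ and its flip), their upper bound comes from the observation that an $n$-bridge position perturbed $n-1$ times becomes isotopic to its own flip, and their lower bound rests on the machinery of multiple bridge surfaces of Tomova \cite{T2}. So the overall plan is aimed in the right direction, but the two steps you yourself flag as obstacles are genuine gaps, and the second would fail as described.

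First, the reduction of the Birman distance to $2(\mathrm{sb}-n)$ is not ``a standard untelescoping argument.'' Eliminating a valley in a zig-zag path of Type I moves and their inverses requires knowing that two different perturbations of the same bridge position admit a common perturbation one level up. For Heegaard splittings this follows from uniqueness of stabilization, but no analogous uniqueness (or diamond) statement is available for perturbations of bridge spheres, so the equality between the Birman distance and $2(\mathrm{sb}-n)$ needs a real proof, not a citation to folklore. Second, and more seriously, the mechanism you propose for the lower bound cannot work as stated. The flip is an ambient homeomorphism of $(S^3,S)$ carrying $k$ to $\bar{k}$ and exchanging $B_+$ with $B_-$; it therefore carries the disk sets of $k$ exactly onto those of $\bar{k}$ (with sides exchanged). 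Consequently, having disk sets ``within bounded curve-graph distance of those of $k$ and likewise of those of $\bar{k}$'' is not a contradictory configuration---up to this homeomorphism it is the same condition twice---so no contradiction can be extracted from proximity alone; the whole difficulty is the bookkeeping of sides through the two unperturbation sequences from $R$ down to $k$ and down to $\bar{k}$. Moreover, your numerology proves too much: a contradiction of the form ``a fixed multiple (say $2$) of $m$ is less than $4n$'' applies equally to $m=2n-1$, since $2(2n-1)=4n-2<4n$, and would therefore also rule out the flippable $(2n-1)$-bridge common stabilization that your own upper bound constructs. Any correct argument must use the hypothesis $m\leq 2n-2$ structurally rather than merely numerically, which is exactly where the substance of the Johnson--Tomova proof lies and what fixes the threshold $4n$.
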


They also gave the following, which holds even if we consider bridge positions modulo flipping.

\begin{theorem}[\cite{JT}]
For an integer $n\geq 2$, there exists a composite knot with a $2n$-bridge position and a $(2n-1)$-bridge position such that the Birman distance is at least $2n-7$.
\end{theorem}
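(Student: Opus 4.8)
The plan is to realize the composite knot as a connected sum of two copies of a single prime summand whose bridge position has very large Hempel distance, and then to transport the flipping lower bound of Johnson--Tomova (the preceding theorem, \cite{JT}) across the connected sum. Concretely, I would fix a prime knot $J$ with $b(J)=n$ admitting an $n$-bridge position $P$ of Hempel distance at least $4n$; such positions exist by the construction of \cite{Ta}. Writing $\bar P$ for the flip of $P$, the flipping theorem gives that the Birman distance between $P$ and $\bar P$ equals $2n-2$ (this forces $n\geq 3$, which is harmless since $2n-7<0$ for smaller $n$). Set $\mathcal K=J\# J$, so that $b(\mathcal K)=2n-1$ by Schubert, and form two bridge positions of $\mathcal K$ by connected sum along the bridge sphere: let $k'$ be the efficient sum of two copies of $P$, which has bridge number $2n-1$, and let $k$ be the sum in which one copy is replaced by $\bar P$, arranged so that no maximum--minimum pair is absorbed at the connect-sum tube, so that $k$ has bridge number $2n$.

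Next I would set up a restriction map to the summand that is defined not just at the endpoints but at every stage of a relating sequence. Let $P_0\subset S^3$ be the decomposing sphere of $\mathcal K=J\# J$. For any bridge surface $S_t$ occurring in a sequence of Type I moves and inverses relating $k$ to $k'$, I would isotope $P_0$ to meet $S_t$ in a single loop; this one-loop position is available by the argument of Theorem \ref{composite}, and the large Hempel distance of the summand keeps $P_0$ essentially rigid, so that on each side of $P_0$ the surface $S_t$ induces a genuine bridge position $\rho(S_t)$ of $J$. By construction $\rho(k)$ is the flip $\bar P$ (up to isotopy) and $\rho(k')$ is $P$. The crux is then a single-step comparison lemma: a composite-level Type I move $S_t\to S_{t+1}$ changes $\rho(S_t)$ by at most one summand-level Type I move (or its inverse). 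Granting this, a relating sequence of length $L$ descends to a sequence relating $\bar P$ to $P$ of length at most $L$ plus a bounded bookkeeping term, so that $L\geq d_{\mathrm{Birman}}(P,\bar P)-c=2n-2-c$; absorbing the flip ambiguity and the tube contribution into $c\leq 5$ yields $d_{\mathrm{Birman}}(k,k')\geq 2n-7$. This framing also makes the ``modulo flipping'' strengthening transparent, since $P$ and $\bar P$ are already flips of one another.

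The main obstacle is the single-step comparison lemma together with the consistent choice of one-loop position. A destabilization of $\mathcal K$ is a local cancellation along a strict upper and a strict lower disk, and the genuine difficulty is twofold. First, I must ensure that $P_0$ can be maintained in one-loop position throughout the entire sequence rather than being driven to meet some intermediate $S_t$ in many curves; this is exactly where the hypothesis that $J$ has Hempel distance at least $4n$ is indispensable, since it prevents $P_0$ from being compressed into or isotoped off the intervening bridge surfaces. Second, I must bound how many summand-level moves one composite-level move can induce, and the delicate case is a cancellation whose disks straddle the connect-sum tube, where the induced change on $\rho$ need not be a single clean cancellation. Controlling these tube interactions, and checking that their cumulative effect is a constant independent of $n$, is what degrades the sharp value $2n-2$ of the prime case to the estimate $2n-7$ and constitutes the heart of the proof.
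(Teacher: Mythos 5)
This statement is not proved in the paper at all: it is quoted verbatim from Johnson--Tomova \cite{JT}, so your proposal can only be measured against the argument in \cite{JT}. Your choice of example is in the right spirit (Johnson--Tomova do build their examples as connected sums involving summands with high-distance bridge spheres, with flipping playing the key role), but your lower-bound mechanism has a genuine gap, and it is exactly the step you flag yourself. The ``single-step comparison lemma,'' together with the claim that the decomposing sphere $P_0$ can be kept meeting \emph{every} intermediate bridge sphere of an arbitrary relating sequence in a single loop, coherently enough to define a restriction map $\rho$, is not a technical verification to be deferred --- it is the entire content, and your justification for it is specious. By Theorem \ref{composite} of this paper, \emph{every} bridge position of the composite knot has Hempel distance $1$; the intermediate spheres $S_t$ are bridge spheres of the composite knot, not of the summand, so the hypothesis that the summand's bridge sphere has distance at least $4n$ gives no ``rigidity'' of $P_0$ relative to the $S_t$ whatsoever. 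Schemes that project an isotopy of a composite knot to its summands are precisely the kind of argument that is known to be treacherous --- the long-standing problem of width additivity, eventually resolved in the negative by Blair and Tomova, fails exactly because thin/bridge structures do not restrict to summands move-by-move --- and your constant $c\le 5$ is reverse-engineered from the target $2n-7$ rather than derived from anything.

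Two further concrete problems. First, you cite \cite{Ta} for the existence of $n$-bridge positions of Hempel distance at least $4n$; but \cite{Ta} (Theorem \ref{criterion} here) is a criterion for distance \emph{greater than $1$} and produces nothing like distance $4n$, so even your input data rests on a misattribution. Second, Johnson--Tomova's actual route avoids any transfer of moves to the summand: their lower bounds come from controlling the complexity of bridge surfaces that could interpolate between the two given positions, using distance-restriction results for multiple bridge surfaces (the paper's reference \cite{T2}) applied to the high-distance summand sphere, combined with elementary counting --- each Type I move or inverse changes the number of bridges by exactly one, so a sequence forced to climb from $2n$ bridges up to some large bridge number $m$ and back down to $2n-1$ has length at least $(m-2n)+(m-(2n-1))$. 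It is this counting, not a move-by-move comparison with the flip bound $2n-2$ for the prime summand, that yields estimates of the shape $2n-7$ (and the companion bound $2n-6$ for two $(2n-1)$-bridge positions mentioned in the paper). As it stands, your proposal records the correct example but replaces the proof by an unproved lemma that is at least as hard as the theorem.
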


We remark that the $2n$-bridge position is not locally minimal, and hence it does not answer Problem \ref{question}.
It turns out that there are two $(2n-1)$-bridge positions such that the Birman distance is at least $2n-6$.
The following are major problems.

\begin{problem}
Determine or estimate the Birman distance in terms of some invariants of the bridge positions.
\end{problem}

\begin{problem}
For a given $n$, does there exist a universal upper bound for the Birman distance between locally minimal bridge positions of every $n$-bridge knot?
\end{problem}

\bigskip

\noindent{\bf Acknowledgements.}
The authors are grateful to Joel~Hass for suggesting the construction of the bridge position $\kappa $.
They would also like to thank Alexander~Zupan for valuable comments.

\bibliographystyle{amsplain}

\setcounter{figure}{9}

\begin{figure}[ht]
\includegraphics[bb=0 0 179 200,clip,scale=2]{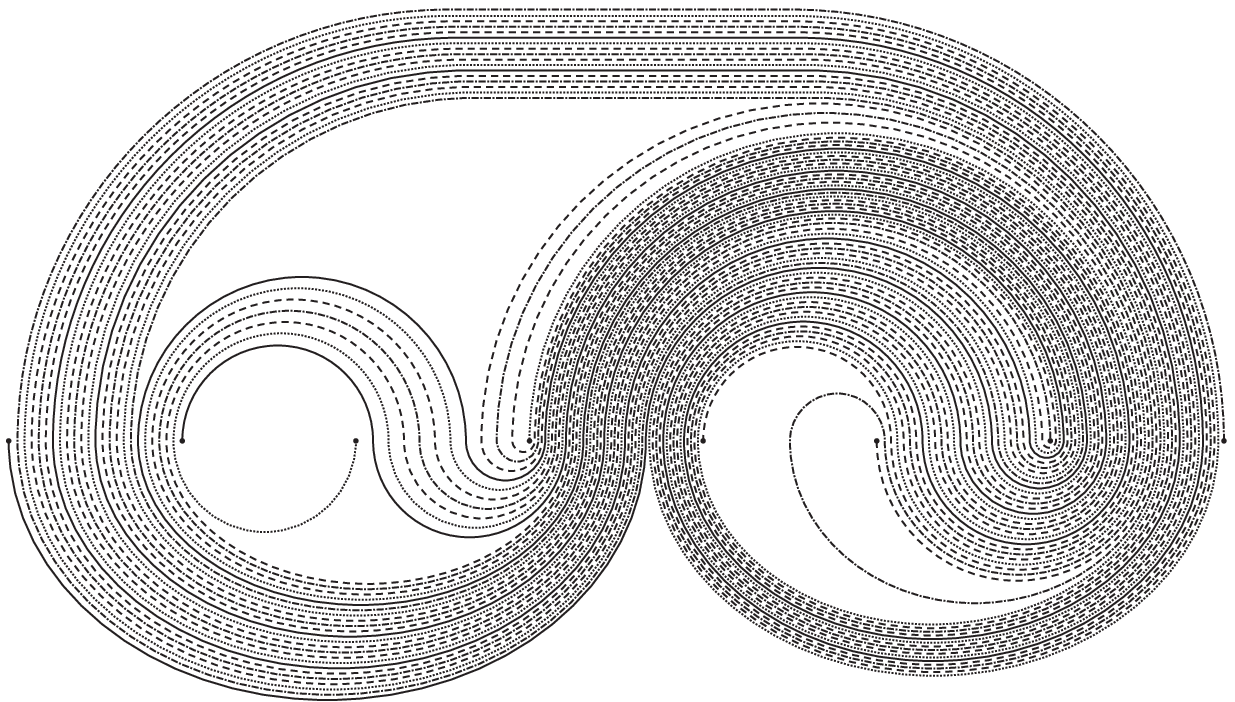}
\caption{The arcs $D^1_+\cap S,\ D^2_+\cap S,\ D^3_+\cap S$ and $D^4_+\cap S$ in $S=S_{20}$, which extend to the next page.}
\label{diagram-6}
\end{figure}
\begin{figure}[ht]
\includegraphics[bb=176 0 355 200,clip,scale=2]{diagram-6.eps}\\[10pt]
The right part of Figure \ref{diagram-6}.
\end{figure}

\begin{figure}[ht]
\includegraphics[bb=0 0 310 150]{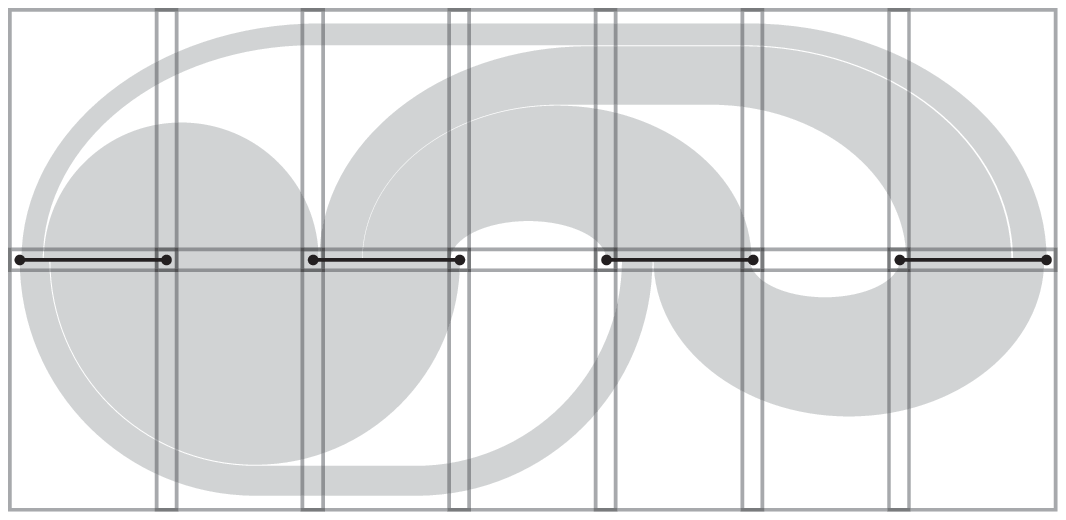}\\[-120pt]
$(+,1)$\hspace{17pt}$(+,2)$\hspace{18pt}$(+,3)$\hspace{17pt}$(+,4)$\hspace{17pt}$(+,5)$\hspace{18pt}$(+,6)$\hspace{17pt}$(+,7)$\\[60pt]
$(-,1)$\hspace{17pt}$(-,2)$\hspace{18pt}$(-,3)$\hspace{17pt}$(-,4)$\hspace{17pt}$(-,5)$\hspace{18pt}$(-,6)$\hspace{17pt}$(-,7)$\\[35pt]
\caption{A bridge diagram of the $4$-bridge position, which is decomposed into the following 14 pages.}
\label{diagram-7}
\end{figure}

\begin{figure}[p]
\includegraphics[bb=0 114 75 230,clip,scale=4.5]{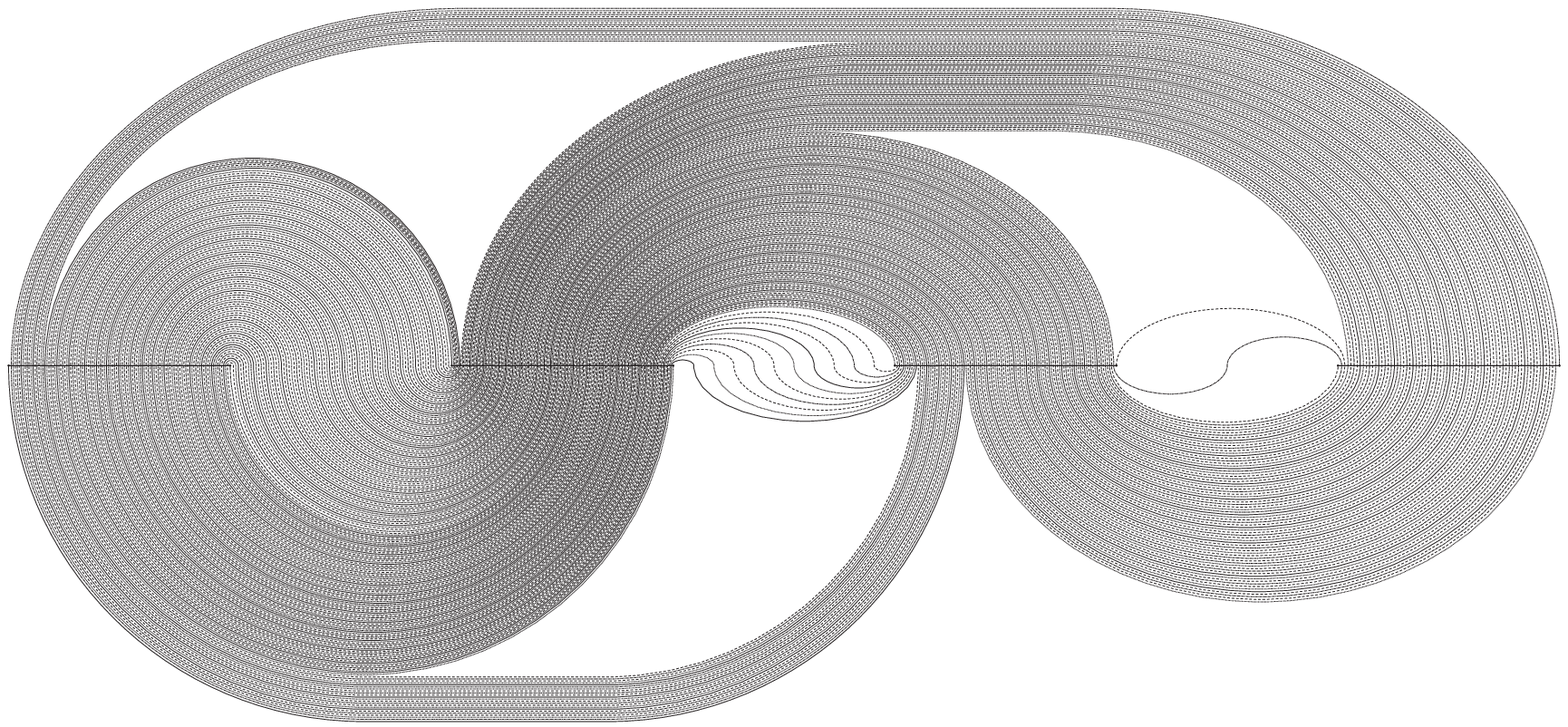}\\[10pt]
The part $(+,1)$ of Figure \ref{diagram-7}.
\end{figure}
\begin{figure}[p]
\includegraphics[bb=73 114 146 230,clip,scale=4.5]{diagram-7.eps}\\[10pt]
The part $(+,2)$ of Figure \ref{diagram-7}.
\end{figure}
\begin{figure}[p]
\includegraphics[bb=143 114 216 230,clip,scale=4.5]{diagram-7.eps}\\[10pt]
The part $(+,3)$ of Figure \ref{diagram-7}.
\end{figure}
\begin{figure}[p]
\includegraphics[bb=214 114 286 230,clip,scale=4.5]{diagram-7.eps}\\[10pt]
The part $(+,4)$ of Figure \ref{diagram-7}.
\end{figure}
\begin{figure}[p]
\includegraphics[bb=284 114 357 230,clip,scale=4.5]{diagram-7.eps}\\[10pt]
The part $(+,5)$ of Figure \ref{diagram-7}.
\end{figure}
\begin{figure}[p]
\includegraphics[bb=354 114 427 230,clip,scale=4.5]{diagram-7.eps}\\[10pt]
The part $(+,6)$ of Figure \ref{diagram-7}.
\end{figure}
\begin{figure}[p]
\includegraphics[bb=425 114 500 230,clip,scale=4.5]{diagram-7.eps}\\[10pt]
The part $(+,7)$ of Figure \ref{diagram-7}.
\end{figure}
\begin{figure}[p]
\includegraphics[bb=0 0 75 115,clip,scale=4.5]{diagram-7.eps}\\[10pt]
The part $(-,1)$ of Figure \ref{diagram-7}.
\end{figure}
\begin{figure}[p]
\includegraphics[bb=73 0 146 115,clip,scale=4.5]{diagram-7.eps}\\[10pt]
The part $(-,2)$ of Figure \ref{diagram-7}.
\end{figure}
\begin{figure}[p]
\includegraphics[bb=143 0 216 115,clip,scale=4.5]{diagram-7.eps}\\[10pt]
The part $(-,3)$ of Figure \ref{diagram-7}.
\end{figure}
\begin{figure}[p]
\includegraphics[bb=214 0 286 115,clip,scale=4.5]{diagram-7.eps}\\[10pt]
The part $(-,4)$ of Figure \ref{diagram-7}.
\end{figure}
\begin{figure}[p]
\includegraphics[bb=284 0 357 115,clip,scale=4.5]{diagram-7.eps}\\[10pt]
The part $(-,5)$ of Figure \ref{diagram-7}.
\end{figure}
\begin{figure}[p]
\includegraphics[bb=354 0 427 115,clip,scale=4.5]{diagram-7.eps}\\[10pt]
The part $(-,6)$ of Figure \ref{diagram-7}.
\end{figure}
\begin{figure}[p]
\includegraphics[bb=425 0 500 115,clip,scale=4.5]{diagram-7.eps}\\[10pt]
The part $(-,7)$ of Figure \ref{diagram-7}.
\end{figure}

\end{document}